 \newtheorem{remark}{Remark}
 \newtheorem{theorem}[remark]{Theorem}
 \newtheorem{proposition}[remark]{Proposition}
 \newtheorem{corollary}[remark]{Corollary}
  \newtheorem{claim}[remark]{Claim}
\title{On the strong partition dimension of graphs}
\author{Ismael Gonz\'alez Yero\\
\\
{\small Departamento de Matem\'aticas, Escuela Polit\'ecnica Superior de Algeciras}\\
{\small Universidad de C\'adiz,} {\small
Av. Ram\'on Puyol s/n, 11202 Algeciras, Spain.} \\ {\small
ismael.gonzalez\@@uca.es}
}
\date{June 10, 2013}
\begin{document}

\maketitle

\begin{abstract}
We present a different way to obtain generators of metric spaces having the property that the ``position'' of every element of the space is uniquely
determined by the distances from the elements of the generators. Specifically we introduce a generator based on a partition of the metric space into sets of elements. The sets of the partition will work as the new elements which will uniquely determine the position of each single element of the space. A set $W$ of vertices of a connected graph $G$ strongly resolves two different vertices $x,y\notin W$ if either $d_G(x,W)=d_G(x,y)+d_G(y,W)$ or $d_G(y,W)=d_G(y,x)+d_G(x,W)$, where $d_G(x,W)=\min\left\{d(x,w)\;:\;w\in W\right\}$. An ordered vertex partition $\Pi=\left\{U_1,U_2,...,U_k\right\}$ of a graph $G$ is a strong resolving partition for $G$ if every two different vertices of $G$ belonging to the same set of the partition are strongly resolved by some set of $\Pi$. A strong resolving partition of minimum cardinality is called a strong partition basis and its cardinality the strong partition dimension. In this article we introduce the concepts of strong resolving partition and strong partition dimension and we begin with the study of its mathematical properties. We give some realizability results for this parameter and we also obtain tight bounds and closed formulae for the strong metric dimension of several graphs.
\end{abstract}

{\it Keywords:} Strong resolving set; strong metric dimension; strong resolving partition; strong partition dimension; strong resolving graph.

{\it AMS Subject Classification Numbers:}   05C12; 05C70.

\section{Introduction}

A vertex $v\in V$ is said to distinguish two vertices $x$ and $y$ if $d_G(v,x)\ne d_G(v,y)$, where $d_G(x, y)$ is the length of a shortest path between $x$ and $y$. A set $S\subset V$ is said to be a \emph{metric generator} for $G$ if any pair of vertices of $G$ is distinguished by some element of $S$. A minimum generator is called a \emph{metric basis}, and its cardinality the \emph{metric dimension} of $G$, denoted by $dim(G)$. Motivated by the problem of uniquely determining the location of an intruder in a network, the concept of locating set was introduced by Slater in \cite{leaves-trees}. The concept of resolving set of a graph was also introduced by Harary and Melter in \cite{harary}, where locating sets were called resolving sets. In fact, the concepts of locating set and resolving set coincides with the concept of metric generator for the metric space $(G, d_G)$, where $G =(V, E)$ is a connected graph, $d_G : V\times V\rightarrow \mathbf{N}$. In this sense, locating sets, resolving sets and metric generators represent the same structure in a graph $G$. Throughout the article $G=(V,E)$ denotes a simple graph or order $n=|V(G)|$, minimum degree $\delta(G)$ and maximum degree $\Delta(G)$ ($\delta$ and $\Delta$ for short).

Slater described the usefulness of these ideas into long range aids to navigation \cite{leaves-trees}. Also, these concepts have some applications in chemistry for representing chemical compounds \cite{pharmacy1,pharmacy2} or in problems of pattern recognition and image processing, some of which involve the use of hierarchical data structures \cite{Tomescu1}. Other applications of this concept to navigation of robots in networks and other areas appear in
\cite{chartrand,robots,landmarks}. Hence, according to its applicability resolving sets has became into an interesting and popular topic of investigation in graph theory. While applications have been continuously appearing, also this invariant has been theoretically studied in a high number of other papers including for example, \cite{chappell,chartrand,chartrand2,fehr,Tomescu1,LocalMetric,tomescu}. Moreover, several variations of metric generators including resolving dominating sets \cite{brigham}, independent resolving sets \cite{chartrand1}, local metric sets \cite{LocalMetric}, strong resolving sets \cite{may-oellermann,Oellermann,seb}, metric colorings \cite{metric-col}  and  resolving partitions \cite{chappell,chartrand2,fehr,tomescu,note-pd-cart}, etc. have been introduced and studied.

Strong metric generators in metric spaces or graphs were first described in \cite{seb}, where the authors presented some applications of this concept to combinatorial search. For instance they worked with problems on false coins known from the borderline of extremal combinatorics and information theory and also, with a problem known from combinatorial optimization related to finding ``connected joins'' in graphs. In such a work results about detection of false coins are used to approximate the value of the metric dimension of some specific graphs. They also proved that the existence of connected joins in graphs can be solved in polynomial time, but on the other hand they obtained that the minimization of the number of components of a connected join is NP-hard. Metric generators were also studied in \cite{Oellermann} where the authors found an interesting connection between the strong metric basis of a graph and the vertex cover of a related graph which they called ``strong resolving graph''. This connection allowed them to prove that finding the metric dimension of a graph is NP-complete. Nevertheless when the problem is restricted to trees, it can be solved in polynomial time, fact that was also noticed in \cite{seb}. A remarkable article about strong metric generators is \cite{strong-genetic}, where the authors used some genetic algorithms to compute the strong metric dimension of some classes of graphs. Other examples of works about strong metric generators are for instance \cite{strong-hamming,strong-polytopes,may-oellermann,Oellermann,seb}.

In this article we present a different way to obtain generators of metric spaces maintaining the property of other similar well known generators related to that the  ``position'' of every element of the space is uniquely determined by the distances from the elements (landmarks) of the generators. Specifically here we introduce a generator based on a partition $\Pi$ of the metric space into sets (set marks). The set marks (sets of the partition $\Pi$) will work as the elements which will uniquely determine the position of each single element of the space. Some of the principal antecedents of this new generator is the resolving partition defined in \cite{chartrand2} or the metric colorings presented in \cite{metric-col}.

A vertex $v$ of a graph $G$ {\em strongly resolves} the two different vertices $x,y$ if either $d_G(x,v)=d_G(x,y)+d_G(y,v)$ or $d_G(y,v)=d_G(y,x)+d_G(x,v)$.
A set $S$ of vertices in a connected graph $G$ is a \emph{strong resolving set} for $G$ if every two vertices of $G$ are strongly resolved by some vertex
of $S$. A strong resolving set of minimum cardinality is called a {\em strong metric basis} and its cardinality the \emph{strong metric dimension} of $G$, which is denoted by $dim_s(G)$.

For a vertex $x$ and a set $W$ of $G$ it is defined the distance between $x$ and $W$ in $G$ as $d_G(x,W)=\min\left\{d(x,w)\;:\;w\in W\right\}$ (if the graph is clear from the context we use only $d(x,W)$). Now notice that given two different vertices $x,y$ and a set of vertices $A$, such that $x\in A$ and $y\notin A$, since $d(x,A)=0$, it could happen that $\min\left\{d(y,a):\,a\in A\right\}=d(y,A)\ne d(y,x)+d(x,A)$. Hence, in order to avoid that case we say that a set $W$ of vertices of $G$ {\em strongly resolves} two different vertices $x,y\notin W$ if either $d_G(x,W)=d_G(x,y)+d_G(y,W)$ or $d_G(y,W)=d_G(y,x)+d_G(x,W)$.  An ordered vertex partition $\Pi=\left\{U_1,U_2,...,U_k\right\}$ of a graph $G$ is a {\em strong resolving partition} for $G$ if every two different vertices of $G$ belonging to the same set of the partition are strongly resolved by some set of $\Pi$. A strong resolving partition of minimum cardinality is called a {\em strong partition basis} and its cardinality the \emph{strong partition dimension}, which is denoted by $pd_s(G)$. Notice that always $pd_s(G)\ge 2$.

A vertex $u$ of $G$ is \emph{maximally distant} from $v$ if for every vertex $w$ in the open neighborhood of $u$, $d_G(v,w)\le d_G(u,v)$. If $u$ is maximally distant from $v$ and $v$ is maximally distant from $u$, then we say that $u$ and $v$ are \emph{mutually maximally distant}.  The {\em boundary} of $G=(V,E)$ is defined as $\partial(G) = \left\{u\in V: \mbox{ there exists $v\in V$  such that $u,v$  are mutually maximally distant }\right\}$. For some basic graph classes, such as complete graphs $K_n$, complete bipartite graphs $K_{r,s}$,  cycles $C_n$ and hypercube graphs $Q_k$, the boundary is simply the whole vertex set.
It is not difficult to see that this property holds for all  $2$-antipodal\footnote{The diameter of $G=(V,E)$ is defined as $D(G)=\max_{u,v\in V}\left\{d(u,v)\right\}$.  We recall that $G=(V,E)$ is $2$-antipodal if for each vertex $x\in V$ there exists exactly one vertex $y\in V$ such that $d_G(x,y)=D(G)$.} graphs and also for all distance-regular graphs.
Notice that the boundary of a tree consists exactly of the set of its leaves. A vertex  of a graph  is a {\em simplicial vertex} if the subgraph induced by  its neighbors is a complete graph. Given a graph $G$, we denote by $\varepsilon(G)$ the set of simplicial vertices of $G$. If the simplicial vertex has degree one, then it is called an {\em end-vertex}. We denote by $\tau(G)$ the set of end-vertices of $G$.
Notice that $\sigma(G)\subseteq \partial(G)$.

The notion of strong resolving graph was introduced first in \cite{Oellermann}. The \emph{strong resolving graph}\footnote{In fact, according to \cite{Oellermann} the strong resolving graph $G'_{SR}$ of a graph $G$ has vertex set $V(G'_{SR})=V(G)$ and two vertices $u,v$ are adjacent in $G'_{SR}$ if and only if $u$ and $v$ are mutually maximally distant in $G$. So, the strong resolving graph defined here is a subgraph of the strong resolving graph defined in \cite{Oellermann} and can be obtained from the latter graph by deleting its isolated vertices.} of $G$ is a graph $G_{SR}$  with vertex set $V(G_{SR}) = \partial(G)$ where two vertices $u,v$ are adjacent in $G_{SR}$ if and only if $u$ and $v$ are mutually maximally distant in $G$. There are some families of graph for which its strong resolving graph can be obtained relatively easy. For instance, we emphasize the following cases.

\begin{remark}$\,$\label{known-G_SR}
\begin{enumerate}[{\rm (i)}]
\item If $\partial(G)=\sigma(G)$, then $G_{SR}\cong K_{|\partial(G)|}$. In particular, $(K_n)_{SR}\cong K_n$ and for any tree $T$ with $l(T)$ leaves, $(T)_{SR}\cong K_{l(T)}$.

\item For any connected block graph\footnote{$G$ is a block graph if every biconnected component (also called block) is a clique. Notice that any vertex in a block graph is either a simplicial vertex or a cut vertex.} $G$ of order $n$ and $c$ cut vertices, $G_{SR}\cong K_{n-c}$.

\item For any $2$-antipodal graph $G$ of order $n$, $G_{SR}\cong \bigcup_{i=1}^{\frac{n}{2}} K_2$. In particular,  $(C_{2k})_{SR}\cong \bigcup_{i=1}^{k} K_2$ and for any hypercube graph $Q_r$, $(Q_r)_{SR}\cong \bigcup_{i=1}^{2^{r-1}} K_2$ .

\item For any positive integer $k$, $(C_{2k+1})_{SR}\cong C_{2k+1}$.
\end{enumerate}
\end{remark}

\section{Realizability and basic results}

Clearly the strong metric dimension and the strong partition dimension are related. If $S=\left\{v_1,v_2,...,v_r\right\}$ is a strong metric basis of $G=(V,E)$, then it is straightforward to observe that the partition $\Pi=\left\{\left\{v_1\right\},\left\{v_2\right\},...,\left\{v_r\right\},V-S\right\}$ is a strong resolving partition for $G$. Thus the following result is obtained.

\begin{theorem}\label{bound-dim-pd}
For any connected graph $G$, $pd_s(G)\le dim_s(G)+1$.
\end{theorem}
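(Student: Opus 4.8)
The plan is to exhibit an explicit strong resolving partition whose cardinality is exactly $dim_s(G)+1$, namely the one indicated in the sentence preceding the statement. Let $S=\{v_1,\dots,v_r\}$ be a strong metric basis of $G$, so that $r=dim_s(G)$, and consider the ordered partition $\Pi=\{\{v_1\},\dots,\{v_r\},V-S\}$, which has $r+1$ parts. I would then verify that $\Pi$ is a strong resolving partition, after which the inequality $pd_s(G)\le|\Pi|=dim_s(G)+1$ follows immediately from the fact that $pd_s(G)$ is the minimum cardinality over all strong resolving partitions.

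To check the resolving property, I would first observe that each singleton $\{v_i\}$ contains a single vertex and therefore harbours no pair of distinct vertices requiring resolution; the only block of $\Pi$ that can contain two distinct vertices is $V-S$. Hence it suffices to show that every pair of distinct vertices $x,y\in V-S$ is strongly resolved by some block of $\Pi$.

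The crux is to translate the vertex-level strong resolution guaranteed by $S$ into the set-level strong resolution demanded of $\Pi$. Since $S$ is a strong resolving set, there is a vertex $v_i\in S$ that strongly resolves $x$ and $y$, say $d(x,v_i)=d(x,y)+d(y,v_i)$, the other case being symmetric. Now the block $\{v_i\}$ is a singleton, so $d(x,\{v_i\})=d(x,v_i)$ and $d(y,\{v_i\})=d(y,v_i)$; moreover $x,y\notin\{v_i\}$ because $x,y\in V-S$ while $v_i\in S$, so the hypothesis $x,y\notin W$ in the set-resolution definition is satisfied. Substituting these equalities yields $d(x,\{v_i\})=d(x,y)+d(y,\{v_i\})$, so the block $\{v_i\}$ strongly resolves $x$ and $y$ in the set sense.

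This last verification is the only point requiring any care: one must confirm both that replacing a vertex by the singleton set containing it preserves the defining strong-resolution equation (which is immediate since $d(z,\{v\})=d(z,v)$) and that the ``not in $W$'' side condition of the partition definition genuinely holds here. As both facts are guaranteed by the construction, $\Pi$ is a strong resolving partition of cardinality $dim_s(G)+1$, which completes the argument.
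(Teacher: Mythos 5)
Your proposal is correct and follows exactly the paper's approach: the paper's proof consists of the same construction $\Pi=\{\{v_1\},\dots,\{v_r\},V-S\}$ from a strong metric basis $S$, declared ``straightforward to observe'' to be a strong resolving partition. Your write-up simply supplies the verification the paper omits, correctly noting that only $V-S$ can contain a pair needing resolution and that the singleton $\{v_i\}$ inherits the vertex-level resolution since $d(z,\{v_i\})=d(z,v_i)$ and $x,y\notin\{v_i\}$.
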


A set $S$ of vertices of $G$ is a \emph{vertex cover} of $G$ if every edge of $G$ is incident with at least one vertex of $S$. The \emph{vertex cover number} of $G$, denoted by $\alpha(G)$, is the smallest cardinality of a vertex cover of $G$. We refer to an $\alpha(G)$-set in a graph $G$ as a vertex cover  of cardinality $\alpha(G)$. Oellermann and Peters-Fransen \cite{Oellermann} showed that the problem of finding the strong metric dimension of a connected graph $G$ can be transformed to the problem of finding the vertex cover number of $G_{SR}$.

\begin{theorem}{\em \cite{Oellermann}}\label{lem_oellerman}
For any connected graph $G$,
$dim_s(G) = \alpha(G_{SR}).$
\end{theorem}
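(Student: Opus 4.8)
The plan is to prove the two inequalities $dim_s(G)\le \alpha(G_{SR})$ and $\alpha(G_{SR})\le dim_s(G)$ separately, exploiting the fact that for strong resolution the only genuinely constraining pairs of vertices are the mutually maximally distant ones, which are exactly the edges of $G_{SR}$. The pivotal observation I would establish first is that \emph{if $u,v$ are mutually maximally distant, then no vertex $w\notin\{u,v\}$ strongly resolves the pair $u,v$}. Indeed, if such a $w$ satisfies $d_G(u,w)=d_G(u,v)+d_G(v,w)$, then $v$ lies strictly between $u$ and $w$ on a shortest path, so the neighbour $v'$ of $v$ on that path towards $w$ has $d_G(u,v')=d_G(u,v)+1$, contradicting that $v$ is maximally distant from $u$; the symmetric equation contradicts that $u$ is maximally distant from $v$. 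This immediately yields the forward inequality: given a strong metric basis $S$, for every edge $uv$ of $G_{SR}$ the pair $u,v$ is strongly resolved by $S$, so $u\in S$ or $v\in S$; hence $S\cap\partial(G)$ is a vertex cover of $G_{SR}$ and $\alpha(G_{SR})\le|S|=dim_s(G)$.

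For the reverse inequality I would prove an extension lemma: for every pair of distinct vertices $x,y$ there exist mutually maximally distant $u,v$ such that $x$ and $y$ both lie on a common shortest $u$-$v$ path, in the order $u,x,y,v$. The construction is to select, among all vertices $w$ with $d_G(y,w)=d_G(y,x)+d_G(x,w)$ (that is, $x$ on a shortest $y$-$w$ path), a vertex $u$ maximizing $d_G(y,u)$; a short distance computation shows this $u$ is maximally distant from $y$. Running the same selection from $u$ through $y$ produces $v$ maximally distant from $u$ with $y$ on a shortest $u$-$v$ path, and one then verifies that $x$ still lies on a shortest $u$-$v$ path and that ``maximally distant from $y$'' upgrades to ``maximally distant from $v$'', so that $u,v$ are mutually maximally distant. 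Granting the lemma, the order $u,x,y,v$ forces both $u$ and $v$ to strongly resolve $x,y$ (since $x$ is then on a shortest $u$-$y$ path and $y$ on a shortest $v$-$x$ path); as $uv$ is an edge of $G_{SR}$, any vertex cover $C$ contains $u$ or $v$ and hence strongly resolves $x,y$. Thus a minimum vertex cover of $G_{SR}$ is a strong resolving set for $G$, giving $dim_s(G)\le\alpha(G_{SR})$ and, with the first part, the claimed equality.

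I expect the extension lemma to be the main obstacle. The two greedy selection steps must be arranged so that the first extremal vertex $u$ is not spoiled by the second extension: the delicate point is showing that maximality of distance from $y$ propagates to maximality from $v$, using that $y$ sits between $u$ and $v$. Degenerate cases where $x$ or $y$ coincides with $u$ or $v$ require a brief check, but they cause no trouble, since a vertex trivially strongly resolves any pair that contains it.
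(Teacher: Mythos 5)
This theorem is not proved in the paper at all: it is stated as an imported result, cited from \cite{Oellermann} (Oellermann and Peters-Fransen), so there is no internal proof to compare your attempt against. Your argument is correct, and it is essentially the original proof from that reference: the observation that a mutually maximally distant pair $u,v$ can be strongly resolved only by $u$ or $v$ themselves (any other resolver would force a neighbour of $u$ or of $v$ at strictly larger distance, contradicting maximal distance) shows that every strong resolving set restricts to a vertex cover of $G_{SR}$, giving $\alpha(G_{SR})\le dim_s(G)$; and the greedy extension of an arbitrary pair $x,y$ to a mutually maximally distant pair $u,v$ lying in the order $u,x,y,v$ on a common shortest path shows that every vertex cover of $G_{SR}$ is a strong resolving set, giving $dim_s(G)\le\alpha(G_{SR})$. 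The step you flag as delicate is indeed the crux, but your sketch of it is sound: if $u$ is maximally distant from $y$ and $d_G(u,v)=d_G(u,y)+d_G(y,v)$, then for any neighbour $u'$ of $u$ one has $d_G(v,u')\le d_G(v,y)+d_G(y,u')\le d_G(v,y)+d_G(y,u)=d_G(v,u)$, so maximal distance from $y$ does upgrade to maximal distance from $v$, and both halves of your proof go through.
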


While an equivalent result for the strong partition dimension of graphs could not be deduced, the result leads to an upper bound for the strong partition dimension. According to Theorems \ref{bound-dim-pd} and \ref{lem_oellerman} we have the following result.

\begin{theorem}\label{bound-cover-pd}
For any connected graph $G$, $pd_s(G)\le \alpha(G_{SR})+1$.
\end{theorem}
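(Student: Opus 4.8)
The plan is to obtain the inequality directly by composing the two results just established, since $\alpha(G_{SR})$ is nothing other than $dim_s(G)$. First I would invoke Theorem~\ref{bound-dim-pd}, which asserts $pd_s(G)\le dim_s(G)+1$ for every connected graph $G$. I would then apply the Oellermann--Peters-Fransen identity of Theorem~\ref{lem_oellerman}, namely $dim_s(G)=\alpha(G_{SR})$, and substitute it into the previous inequality to conclude $pd_s(G)\le \alpha(G_{SR})+1$.

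It is worth recording that the bound is in fact constructive, which makes transparent why no extra hypothesis on $G$ is required. Let $C$ be an $\alpha(G_{SR})$-set, i.e.\ a minimum vertex cover of $G_{SR}$. Appealing to the correspondence underlying Theorem~\ref{lem_oellerman} (a vertex cover of $G_{SR}$ being exactly a strong resolving set of $G$), the set $C$ is a strong metric basis of $G$. Writing $C=\{v_1,\dots,v_r\}$ with $r=\alpha(G_{SR})$, the observation preceding Theorem~\ref{bound-dim-pd} shows that $\Pi=\{\{v_1\},\dots,\{v_r\},V-C\}$ is a strong resolving partition of $G$: any two vertices sharing a part necessarily lie in $V-C$, and such a pair is strongly resolved by one of the singleton sets. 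Since $\Pi$ has $r+1=\alpha(G_{SR})+1$ parts, this exhibits a strong resolving partition of the claimed size and gives the bound at once.

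There is essentially no obstacle, as the substance lies entirely in the two cited theorems: Theorem~\ref{bound-dim-pd} supplies the passage from a strong metric basis to a strong resolving partition, and Theorem~\ref{lem_oellerman} supplies the translation to vertex covers of $G_{SR}$. The only point meriting a moment's attention is the degenerate possibility $V-C=\emptyset$; there $\Pi$ collapses to the family of singletons, which is vacuously a strong resolving partition, so $pd_s(G)\le r\le \alpha(G_{SR})+1$ and the inequality continues to hold.
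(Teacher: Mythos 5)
Your proposal is correct and follows exactly the paper's route: the theorem is stated there as an immediate consequence of Theorem~\ref{bound-dim-pd} ($pd_s(G)\le dim_s(G)+1$) combined with Theorem~\ref{lem_oellerman} ($dim_s(G)=\alpha(G_{SR})$). Your additional constructive paragraph and the treatment of the case $V-C=\emptyset$ are sound but redundant (in fact $V-C$ is never empty, since $\alpha(G_{SR})\le|\partial(G)|-1<|V|$); the two cited theorems already carry the whole argument.
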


A {\em clique} in a graph $G$ is a set of vertices $S$ such that $\langle S\rangle$ is isomorphic to a complete graph. The maximum cardinality of a clique in a graph $G$ is the {\em clique number} and it is denoted by $\omega(G)$. We will say that $S$ is an $\omega(G)$-clique if $|S|=\omega(G)$. The following claim, and its consequence, will be very useful to obtain the strong metric dimension of some graphs.

\begin{claim}
If two vertices are mutually maximally distant in a graph $G$, then they belong to different sets in any strong resolving partition for $G$.
\end{claim}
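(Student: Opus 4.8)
The plan is to argue by contradiction. Suppose $u$ and $v$ are mutually maximally distant and yet both lie in a common part $U_i$ of some strong resolving partition $\Pi=\{U_1,\dots,U_k\}$. Since $u\neq v$ belong to the same part, the defining property of a strong resolving partition forces some part $U_j$ to strongly resolve the pair $u,v$; as the set-resolution condition requires $u,v\notin U_j$, we necessarily have $j\neq i$. Without loss of generality I would assume the first alternative holds, namely $d(u,U_j)=d(u,v)+d(v,U_j)$.

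The heart of the argument is to convert this set-level identity into a vertex-level betweenness statement. First I would choose a vertex $w\in U_j$ realizing the distance from $v$, so that $d(v,w)=d(v,U_j)$. Combining $d(u,U_j)\le d(u,w)$ (valid because $w\in U_j$) with the triangle inequality $d(u,w)\le d(u,v)+d(v,w)$ and the hypothesis, I obtain the chain $d(u,U_j)\le d(u,w)\le d(u,v)+d(v,w)=d(u,v)+d(v,U_j)=d(u,U_j)$, which squeezes every term into an equality. In particular $d(u,w)=d(u,v)+d(v,w)$, i.e. $v$ lies on a shortest path from $u$ to $w$. Note also that $w\in U_j$ while $v\in U_i$ with $j\neq i$, so $w\neq v$ and hence $d(v,w)\ge 1$.

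Finally I would exploit the maximal-distance hypothesis. Fixing a shortest $u$-$w$ path through $v$ and letting $v'$ be the neighbor of $v$ on this path on the side of $w$, the fact that $v'$ again lies on a shortest $u$-$w$ path gives $d(u,v')=d(u,w)-d(v',w)=d(u,v)+1>d(u,v)$. But $v'$ is a vertex in the open neighborhood of $v$, so this contradicts that $v$ is maximally distant from $u$. In the symmetric case $d(v,U_j)=d(v,u)+d(u,U_j)$, the identical reasoning with the roles of $u$ and $v$ interchanged produces a neighbor of $u$ that is strictly farther from $v$ than $u$ is, contradicting that $u$ is maximally distant from $v$. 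Either way we reach a contradiction, which proves the claim.

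The step I expect to be the main obstacle is precisely the passage from the partition hypothesis to a usable vertex: the strong-resolution condition is phrased in terms of the aggregate distances $d(u,U_j)$ and $d(v,U_j)$, and one must extract a single concrete vertex $w$ of $U_j$ relative to which $v$ is genuinely an interior vertex of a shortest path. The key point is to select $w$ as a closest vertex of $U_j$ to $v$ (rather than to $u$); this is exactly what makes the squeeze collapse to the equality $d(u,w)=d(u,v)+d(v,w)$. Once that vertex-level betweenness is established, the contradiction with mutual maximal distance is immediate, and everything else is routine.
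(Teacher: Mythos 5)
Your proof is correct: the paper states this claim without any proof (treating it as immediate), and your argument supplies exactly the justification it implicitly relies on — the standard one from the strong resolving set literature, where a set-level resolution $d(u,U_j)=d(u,v)+d(v,U_j)$ is collapsed, via a vertex $w\in U_j$ closest to $v$, into the vertex-level equality $d(u,w)=d(u,v)+d(v,w)$, after which the neighbor $v'$ of $v$ on a shortest $u$--$w$ path satisfies $d(u,v')=d(u,v)+1$, contradicting that $v$ is maximally distant from $u$. The one detail worth keeping explicit, as you do, is that $v\notin U_j$ (so $w\neq v$ and $d(v,w)\ge 1$), which is what guarantees the interior neighbor $v'$ exists.
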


\begin{corollary}\label{coro-pd-clique}
For any connected graph $G$, $pd_s(G)\ge \omega(G_{SR})$.
\end{corollary}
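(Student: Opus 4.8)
The plan is to extract a maximum clique from $G_{SR}$ and use the preceding Claim to force its vertices into pairwise distinct parts of any strong resolving partition. First I would fix an $\omega(G_{SR})$-clique $C=\left\{u_1,u_2,\dots,u_{\omega(G_{SR})}\right\}$, so that $\langle C\rangle$ is a complete subgraph of $G_{SR}$. By the very definition of the strong resolving graph, every edge of $G_{SR}$ joins two vertices that are mutually maximally distant in $G$; since $C$ induces a complete subgraph, the vertices $u_1,\dots,u_{\omega(G_{SR})}$ are pairwise mutually maximally distant in $G$.

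Next I would invoke the Claim on each pair. Let $\Pi$ be an arbitrary strong resolving partition for $G$. For every pair $u_i,u_j$ with $i\neq j$, the Claim guarantees that $u_i$ and $u_j$ lie in different sets of $\Pi$. As this holds for all such pairs simultaneously, the $\omega(G_{SR})$ vertices of $C$ must occupy $\omega(G_{SR})$ pairwise distinct parts of $\Pi$, whence $|\Pi|\ge \omega(G_{SR})$. Since this lower bound holds for \emph{every} strong resolving partition, it holds in particular for a strong partition basis, whose cardinality is $pd_s(G)$; therefore $pd_s(G)\ge \omega(G_{SR})$.

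The argument is essentially immediate once the Claim is available, so I do not expect a genuine obstacle. The only point deserving care is the translation between the purely combinatorial object ``clique in $G_{SR}$'' and the metric condition ``pairwise mutually maximally distant in $G$'': these coincide precisely because mutual maximal distance is the adjacency relation defining $G_{SR}$, and one should note that all vertices of $C$ automatically lie in $\partial(G)=V(G_{SR})$, so no pair is excluded from the hypothesis of the Claim. No distance estimates or further case analysis are required.
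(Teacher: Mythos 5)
Your proof is correct and follows exactly the route the paper intends: the corollary is stated as an immediate consequence of the preceding Claim, and your argument (vertices of an $\omega(G_{SR})$-clique are pairwise mutually maximally distant in $G$, hence by the Claim they occupy pairwise distinct parts of any strong resolving partition, forcing $|\Pi|\ge\omega(G_{SR})$) is precisely the implicit derivation. Your added remark that clique vertices lie in $\partial(G)$ and that adjacency in $G_{SR}$ is by definition mutual maximal distance is a careful spelling-out of what the paper leaves unsaid, not a deviation.
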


Throughout the article we will present several examples (for instance paths, trees, complete graphs) in which the bounds of Theorems \ref{bound-dim-pd} and \ref{bound-cover-pd}, and Corollary \ref{coro-pd-clique} are tight. Nevertheless, the bounds of Theorems \ref{bound-dim-pd} and \ref{bound-cover-pd} are frequently very far from the exact value for the strong partition dimension. In fact, the difference between the strong metric dimension and the strong partition dimension of a graph can be arbitrarily large as we show at next. To do so we need to introduce some additional notation. Given a unicyclic graph $G$ with unique cycle $C_t$, any end-vertex $u$ of $G$ is said to be a \emph{terminal vertex} of a vertex $v\in V(C_t)$ if $d_{G}(u,v)<d_{G}(u,w)$ for every other vertex $w\in V(C_t)$. The {\em terminal degree} $ter(v)$ of a vertex $v\in V(C_t)$ is the number of terminal vertices of $v$. A vertex $v\in V(C_t)$ of $G$ is a \emph{major vertex} of $G$ if it has positive terminal degree ($v$ has degree greater than two in $G$). Notice that $|\tau(G)|=\sum_{v_i\in V(C_t)}ter(v_i)$.

Let $\mathcal{C}_1$ be the family of unicyclic graphs $G(r,t)$, $r\ge 2$ and $t\ge 4$, defined in the following way. Every $G(r,t)\in \mathcal{C}_1$ has unique cycle $C_t$ of order $t\ge 4$ and also, $G(r,t)$ has only one vertex $v\in V(C_t)$ of degree greater than two with $ter(v)=\delta(v)-2=|\tau(G)|=r\ge 2$.

\begin{proposition}\label{prop-C_1}
If $G(r,t)\in \mathcal{C}_1$, then
$$pd_s(G(r,t))=r+1\; \mbox{ and }\; dim_s(G(r,t))=\left\{\begin{array}{ll}
    r+\frac{t-1}{2}, & \mbox{ if $t$ is odd,} \\
    & \\
    r+\frac{t-2}{2}, & \mbox{ if $t$ is even.}
  \end{array}\right.
$$
\end{proposition}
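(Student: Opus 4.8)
The plan is to determine the strong resolving graph $(G(r,t))_{SR}$ explicitly, read off $dim_s$ from Theorem~\ref{lem_oellerman}, and obtain $pd_s$ by combining the clique lower bound of Corollary~\ref{coro-pd-clique} with an explicit strong resolving partition of size $r+1$. Throughout I would write $V(C_t)=\{c_0,c_1,\dots,c_{t-1}\}$ with $v=c_0$, and let $\ell_1,\dots,\ell_r$ be the $r$ end-vertices, each attached at $c_0$ (the general case of pendant paths is handled the same way, since none of the computations below depend on the path lengths). The first step is to compute the boundary and the mutually maximally distant (MMD) pairs. Every $\ell_i$ is simplicial, hence maximally distant from every vertex, so any two leaves are MMD, and a leaf is MMD with a cycle vertex $c_j$ exactly when $c_j$ is a local maximum of $d(c_0,\cdot)$ along $C_t$; moreover $c_0\notin\partial(G)$ because each $\ell_i$ is a neighbour strictly farther from every external vertex. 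Since distances and neighbourhoods among the other cycle vertices are unchanged by the pendant edges, two cycle vertices $c_i,c_j\neq c_0$ are MMD in $G(r,t)$ iff they are MMD in $C_t$.

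Writing $t=2m$, the unique farthest vertex from $c_0$ is $c_m$, and I would conclude
$$(G(r,2m))_{SR}\cong K_{r+1}\cup \tfrac{t-2}{2}\,K_2,$$
where the $K_{r+1}$ sits on $\{\ell_1,\dots,\ell_r,c_m\}$ and the $m-1=\tfrac{t-2}{2}$ copies of $K_2$ are the antipodal pairs $\{c_j,c_{j+m}\}$, $1\le j\le m-1$ (the pair $\{c_0,c_m\}$ having been destroyed by deleting $c_0$). Writing $t=2m+1$, the two farthest vertices are $c_m,c_{m+1}$; here the cycle part is $(C_{2m+1})_{SR}\cong C_{2m+1}$ (Remark~\ref{known-G_SR}(iv)) with $c_0$ deleted, i.e.\ a path on $\{c_1,\dots,c_{2m}\}$ joining its endpoints $c_m$ and $c_{m+1}$, and in addition each leaf is adjacent to both $c_m$ and $c_{m+1}$. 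From these descriptions $dim_s=\alpha(G_{SR})$ by Theorem~\ref{lem_oellerman}: for $t=2m$ this is immediate, $\alpha(K_{r+1}\cup(m-1)K_2)=r+(m-1)=r+\tfrac{t-2}{2}$; for $t=2m+1$ I would compute a maximum independent set of the path-plus-clique graph, noting it uses at most one leaf together with at most $m-1$ internal path vertices, or no leaf and at most $m$ path vertices, hence has size $m$, so $\alpha(G_{SR})=(r+2m)-m=r+\tfrac{t-1}{2}$. In both cases the clique $\{\ell_1,\dots,\ell_r,c_m\}$ gives $\omega(G_{SR})\ge r+1$, so Corollary~\ref{coro-pd-clique} yields the lower bound $pd_s(G(r,t))\ge r+1$.

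It remains to produce a strong resolving partition with $r+1$ parts, which is the crux. The idea is to isolate the farthest vertex (or pair) as a single ``beacon'' part. For $t=2m$ I would take $U_{r+1}=\{c_m\}$, put the near arc in $U_1=\{\ell_1,c_0,c_1,\dots,c_{m-1}\}$, the far arc in $U_2=\{\ell_2,c_{m+1},\dots,c_{2m-1}\}$, and the remaining leaves as singletons $U_3=\{\ell_3\},\dots,U_r=\{\ell_r\}$ (which exist since $r\ge 2$). Every MMD pair is separated: the leaves lie in distinct parts, $c_m$ is alone, and each antipodal pair is split between $U_1$ and $U_2$. For the same-part pairs I would observe that $\ell_1,c_0,c_1,\dots,c_m$ lie on one geodesic and $\ell_2,c_{2m-1},\dots,c_{m+1},c_m$ on another, so for any two vertices of $U_1$ (resp.\ $U_2$) the one nearer $c_m$ lies on a shortest path from the other to $c_m$; hence $U_{r+1}=\{c_m\}$ strongly resolves every pair inside $U_1$ and $U_2$, while the other parts are singletons. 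For $t=2m+1$ I would instead take $U_{r+1}=\{c_m,c_{m+1}\}$ and split $\{c_0,\dots,c_{m-1}\}$ into $U_1$ and $\{c_{m+2},\dots,c_{2m}\}$ into $U_2$ (again with $\ell_1,\ell_2$ and leaf singletons $U_3,\dots,U_r$). The same geodesic check, now using $d(c_a,U_{r+1})=m-a$, shows $U_{r+1}$ strongly resolves every pair inside $U_1$ and $U_2$.

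The one genuinely new point in the odd case is the pair $c_m,c_{m+1}$ sitting inside $U_{r+1}$: these are not MMD, so they may share a part provided some other part resolves them. Here $U_1$ does the job, since the nearest vertex of $U_1$ to $c_{m+1}$ is $c_{m-1}$ and the shortest path $c_{m+1}\,c_m\,c_{m-1}$ runs through $c_m$, giving $d(c_{m+1},U_1)=d(c_{m+1},c_m)+d(c_m,U_1)$. This exhibits a strong resolving partition of size $r+1$, so $pd_s(G(r,t))\le r+1$, and with the lower bound we get equality. I expect the main obstacle to be precisely this last family of verifications: one must confirm that the beacon part attains its \emph{set}-distance along a geodesic passing through the vertex to be resolved, and it is exactly for this reason that the farthest vertex (rather than a leaf, whose nearest-point routing is spoiled by the short edge back to $c_0$) must be singled out as its own part.
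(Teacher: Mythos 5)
Your proposal is correct and takes essentially the same route as the paper: the same clique lower bound coming from $\{\ell_1,\dots,\ell_r,c_m\}$ in the strong resolving graph (Corollary \ref{coro-pd-clique}), the same ``beacon'' partition that isolates the diametral vertex $c_m$ (or the diametral pair $\{c_m,c_{m+1}\}$ for odd $t$) and resolves within-part pairs along geodesics toward it, and the same explicit description of $(G(r,t))_{SR}$ (namely $K_{r+1}\cup\frac{t-2}{2}K_2$ for even $t$, and $K_r$ joined to the endpoints of a path $P_{t-1}$ for odd $t$) combined with Theorem \ref{lem_oellerman} to get $dim_s$. One small point to tighten: for genuine pendant paths the parts $U_3,\dots,U_r$ are paths minus $c_0$ rather than singletons, so pairs inside them still require a resolving part --- the part containing $c_0$ does it, exactly as the paper resolves pairs in $A_i$, $2\le i\le r-1$, through $v_0\in A_1$ --- which is the same one-line geodesic check you already perform elsewhere; conversely, your explicit verification that $U_1$ resolves the odd-case pair $\{c_m,c_{m+1}\}$ is more careful than the paper's appeal to ``analogously.''
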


\begin{proof}
Let $V(C_t)=\left\{v_0,v_1,...,v_{t-1}\right\}$. Without loss of generality we suppose that $v_0$ is the vertex of $G(r,t)$ such that $\delta(v_0)=r+2$. Let $u_1,u_2,...,u_r$ be the set of terminal vertices of $v_0$ and let $P_i$ be the $u_i-v_0$ path for every $i\in \left\{1,...,r\right\}$. Notice that $u_1,u_2,...,u_r$ are mutually maximally distant between them and also, there exists at least one vertex (for instance $v_{\left\lfloor\frac{t}{2}\right\rfloor}$) being diametral with $v_0$ in $C_t$ such that it is mutually maximally distant with any other vertex in $u_1,u_2,...,u_r$. Thus, $pd_s(G(r,t))\ge \omega((G(r,t))_{SR})\ge r+1$.

Now, suppose $t$ even. Let the vertex partition $\Pi=\left\{A_1,...,A_r,B\right\}$ such that $A_1=V(P_1)\cup\left\{v_1,v_2,...,v_{\left\lfloor\frac{t}{2}\right\rfloor-1}\right\}$, $A_2=V(P_2)-\left\{v_0\right\}$, $A_3=V(P_3)-\left\{v_0\right\}$, ..., $A_r=V(P_r)-\left\{v_0\right\}\cup \left\{v_{\left\lfloor\frac{t}{2}\right\rfloor+1},...,v_{t-1}\right\}$ and $B=\left\{v_{\left\lfloor\frac{t}{2}\right\rfloor}\right\}$. We claim that $\Pi$ is a strong resolving partition for $G(r,t)$. Let $x,y$ be two different vertices of $G(r,t)$. If $x,y\in A_1$ or $x,y\in A_r$, then since $d(u_1,v_{\left\lfloor\frac{t}{2}\right\rfloor})=d(u_1,v_0)+d(v_0,v_{\left\lfloor\frac{t}{2}\right\rfloor})=d(u_1,v_0)+D(C_t)$ and $d(u_r,v_{\left\lfloor\frac{t}{2}\right\rfloor})=d(u_r,v_0)+d(v_0,v_{\left\lfloor\frac{t}{2}\right\rfloor})=d(u_r,v_0)+D(C_t)$, we have either $d(x,B)=d(x,v_{\left\lfloor\frac{t}{2}\right\rfloor})=d(x,y)+d(y,v_{\left\lfloor\frac{t}{2}\right\rfloor})=d(x,y)+d(y,B)$ or $d(y,B)=d(y,v_{\left\lfloor\frac{t}{2}\right\rfloor})=d(y,x)+d(x,v_{\left\lfloor\frac{t}{2}\right\rfloor})=d(y,x)+d(x,B)$. Now, if $x,y\in A_i$ for some $i\in\left\{2,...,r-1\right\}$, then we have either $d(x,A_1)=d(x,v_0)=d(x,y)+d(y,v_0)=d(x,y)+d(y,A_1)$ or $d(y,A_1)=d(y,v_0)=d(y,x)+d(x,v_0)=d(y,x)+d(x,A_1)$. Thus, $\Pi$ is a strong resolving partition for $G(r,t)$ and $pd_s(G(r,t))\le r+1$. Therefore, we obtain that $pd_s(G(r,t))=r+1$.

Now suppose $t$ odd and let $\Pi'=\left\{A'_1,...,A'_r,B'\right\}$ such that $A'_1=V(P_1)\cup\left\{v_1,v_2,...,v_{\left\lfloor\frac{t}{2}\right\rfloor-1}\right\}$, $A'_2=V(P_2)-\left\{v_0\right\}$, $A'_3=V(P_3)-\left\{v_0\right\}$, ..., $A'_r=V(P_r)-\left\{v_0\right\}\cup \left\{v_{\left\lceil\frac{t}{2}\right\rceil+1},...,v_{t-1}\right\}$ and $B'=\left\{v_{\left\lfloor\frac{t}{2}\right\rfloor},v_{\left\lceil\frac{t}{2}\right\rceil}\right\}$. Analogously to the case $t$ even we obtain that $\Pi'$ is a strong resolving partition for $G(r,t)$ and $pd_s(G(r,t))\le r+1$. Therefore, we obtain that $pd_s(G(r,t))=r+1$.

To obtain the strong metric dimension of $G(r,t)$ we consider the following. If $t$ is even, then the strong resolving graph of $G(r,t)$ is formed by $\frac{t-2}{2}+1$ connected components, one of them isomorphic to a complete graph $K_{r+1}$ and the other $\frac{t-2}{2}$ copies isomorphic to $K_2$. Thus we have that $dim_s(G(r,t))=\alpha((G(r,t))_{SR})=r+\frac{t-2}{2}$. On the other hand, if $t$ is odd, then the strong resolving graph of $G(r,t)$ is isomorphic to a graph obtained from a complete graph $K_r$ and a path $P_{t-1}$ by adding all the possible edges between the vertices of the complete graph $K_r$ and the leaves of the path $P_{t-1}$. Thus we have that $dim_s(G(r,t))=\alpha((G(r,t))_{SR})=r+\frac{t-1}{2}$.
\end{proof}

From now on in this section we deal with the problem of realization for the strong partition dimension of graphs.

\begin{theorem}\label{realiz-1}
For any integers $r,n$ such that $2\le r\le n$ there exists a connected graph $G$ of order $n$ with $pd_s(G)=r$.
\end{theorem}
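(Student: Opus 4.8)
The plan is to exhibit, for each pair $2\le r\le n$, an explicit connected graph whose strong resolving graph is the complete graph $K_r$, and then to squeeze $pd_s$ between the lower bound of Corollary \ref{coro-pd-clique} and the upper bound of Theorem \ref{bound-cover-pd}. Concretely, I would take a complete graph with vertex set $\{w_1,\ldots,w_r\}$ and attach to $w_1$ a pendant path $w_1 p_1 p_2\cdots p_{n-r}$ of length $n-r$; call the resulting graph $G$. Its order is $r+(n-r)=n$, and it is connected. When $r=n$ the path is empty and $G=K_n$, so that case is either handled separately or simply absorbed into the general computation below.

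The heart of the argument is to determine $G_{SR}$, that is, the boundary $\partial(G)$ together with the mutually maximally distant (MMD) relation. First I would check that the simplicial vertices $w_2,\ldots,w_r$ are pairwise MMD, since each has its whole neighbourhood inside the clique. Next, using $d(p_{n-r},w_i)=(n-r)+1$ for every $i\ge 2$ while $d(p_{n-r},w_1)=n-r$, I would verify that the far endpoint $p_{n-r}$ is MMD with each $w_i$, $i\ge 2$. Finally I would show that $w_1$ is \emph{not} in the boundary (it has a clique-neighbour strictly farther from $p_{n-r}$ than $w_1$ is, and the path-neighbour $p_1$ is strictly farther from each $w_i$ than $w_1$ is), and that every internal path vertex fails to be maximally distant from anything, because distance along the path varies monotonically. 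This yields $\partial(G)=\{w_2,\ldots,w_r,p_{n-r}\}$, a set of exactly $r$ vertices that are pairwise MMD, whence $G_{SR}\cong K_r$.

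Once $G_{SR}\cong K_r$ is established the conclusion is immediate: $\omega(G_{SR})=r$ gives $pd_s(G)\ge r$ by Corollary \ref{coro-pd-clique}, while $\alpha(G_{SR})=r-1$ gives $pd_s(G)\le \alpha(G_{SR})+1=r$ by Theorem \ref{bound-cover-pd}; hence $pd_s(G)=r$. I expect the only delicate point to be the structural analysis of $G_{SR}$, and in particular the observation that appending the path removes $w_1$ from the boundary while its far endpoint $p_{n-r}$ takes $w_1$'s place, so that the clique number of $G_{SR}$ stays exactly $r$ and the order is padded up to $n$ without inflating $pd_s$. The extreme cases $r=2$ (where $G$ is just the path $P_n$ and $G_{SR}\cong K_2$) and $r=n$ (where $G=K_n$ and $G_{SR}\cong K_n$) serve as convenient sanity checks for the construction.
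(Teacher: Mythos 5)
Your proposal is correct and matches the paper's proof essentially verbatim: the same comet construction (a clique $K_r$ with a pendant path attached, handling $r=n$ as $K_n$), the same identification of the boundary as the $r-1$ simplicial clique vertices together with the far path endpoint giving $G_{SR}\cong K_r$, and the same conclusion by combining Corollary \ref{coro-pd-clique} with Theorem \ref{bound-cover-pd}. Your MMD verifications are merely a more detailed write-up of what the paper asserts in one line.
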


\begin{proof}
If $r=n$, then $pd_s(K_n)=n$. Suppose $r<n$. Let $G$ be a graph defined in the following way. We begin with a complete graph $K_{r}$ and a path of order $n-r+1$. Then we identify one leaf $u$ of the path with a vertex $v$ of the complete graph (in some literature this graph is called a comet). Notice that the order of $G$ is $n$. Also notice that only those vertices of the complete graph different from $v$ and the leaf of the path different from $u$ are mutually maximally distant between them and they form the boundary of $G$. Thus, $G_{SR}\cong K_{r}$ and from Theorem \ref{bound-cover-pd} and Corollary \ref{coro-pd-clique} we obtain that $pd_s(G)=r$.
\end{proof}

As the above results shows, any two pair of integers $r,n$ such that $2\le r\le n$ are realizable as the strong partition dimension and the order of a graph, respectively. Nevertheless, as we can see at next, not every three  integers $r,t,n$ are realizable as the strong partition dimension, the strong metric dimension and the order of a graph, respectively. Since $pd_s(G)=2$ if and only if $G$ is the path $P_n$ (see Theorem \ref{path-only-two}), and $dim_s(P_n)=1$, we have that if $t\ne 1$, then the values $2,t,n$ are not realizable as the strong partition dimension, the strong metric dimension and the order of a graph, respectively. Moreover, since $pd_s(G)=n$ if and only if $G$ is a complete graph $K_n$ (see Theorem \ref{pd-n-complete}), and $dim_s(K_n)=n-1$, it follows that if $r<t+1=n$, then the values $r,t,n$ are not realizable as the strong partition dimension, the strong metric dimension and the order of a graph, respectively.

Notice that the comet graph of the proof of Theorem \ref{realiz-1} has order $n$, $pd_s(G)=r$ and $dim_s(G)=r-1=t<t+1<n$. Thus we have the following result.

\begin{remark}
For any integers $r,t,n$ such that $3\le r=t+1<n$ there exists a connected graph $G$ of order $n$ with $pd_s(G)=r$ and $dim_s(G)=t$.
\end{remark}

\begin{remark}
For any integers $r,t,n$ such that $3\le r=t\le n-3$ there exists a connected graph $G$ of order $n$ with $pd_s(G)=r$ and $dim_s(G)=t$.
\end{remark}

\begin{proof}
To prove the result we consider a graph $H$ constructed in the following way. We begin with a graph $G(r-1,4)\in \mathcal{C}_1$. Suppose that $v$ is the vertex of degree $r+1$ in the unique cycle of $G(r-1,4)$ and let $u$ be a terminal vertex of $v$. Then to obtain the graph $H$ we subdivide the edge $uv$ by adding $n-r-3$ vertices. Notice that the order of $H$ is $n-r-3+r-1+4=n$ and according to Proposition \ref{prop-C_1} we have that $pd_s(H)=r$ and $dim_s(H)=r-1+\frac{4-2}{2}=r=t$.
\end{proof}

\begin{theorem}
For any integers $r,t,n$ such that $3\le r<t\le \frac{n+r-2}{2}$ there exists a connected graph $G$ of order $n$ with $pd_s(G)=r$ and $dim_s(G)=t$.
\end{theorem}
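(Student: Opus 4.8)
The plan is to realize the prescribed values through a single member of the family $\mathcal{C}_1$, whose strong partition dimension and strong metric dimension are already pinned down by Proposition \ref{prop-C_1}, and then to inflate its order to $n$ by a subdivision that keeps it inside $\mathcal{C}_1$. First I would fix the two structural parameters of the unicyclic graph. Writing $G(a,c)$ for the member of $\mathcal{C}_1$ with $a$ terminal vertices and cycle length $c$, Proposition \ref{prop-C_1} gives $pd_s(G(a,c))=a+1$, so I set $a=r-1$; since $r\ge 3$ this respects the requirement $a\ge 2$. To control the strong metric dimension I exploit the odd-cycle branch of the formula: for $c$ odd, $dim_s(G(a,c))=a+\frac{c-1}{2}$, so imposing $dim_s=t$ forces $c=2t-2r+3$. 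Because $t>r$, this $c$ is an odd integer with $c\ge 5>4$, hence $G(r-1,2t-2r+3)$ is a legitimate element of $\mathcal{C}_1$, and Proposition \ref{prop-C_1} already yields $pd_s=r$ and $dim_s=t$.

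The delicate point is the order. The minimal realization of $G(r-1,2t-2r+3)$, obtained by attaching each of the $r-1$ terminal vertices to the major cycle vertex by a single edge, has order $(2t-2r+3)+(r-1)=2t-r+2$. The hypothesis $t\le\frac{n+r-2}{2}$ is exactly equivalent to $2t-r+2\le n$, so this minimal order never exceeds $n$; this is precisely why I select the odd-cycle branch rather than the even one, since the latter would overshoot the bound by one.

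To reach order exactly $n$, I would subdivide one pendant edge, say the edge joining the major vertex $v_0$ to a terminal vertex $u_1$, inserting $n-(2t-r+2)\ge 0$ new vertices along the path $P_1$. This operation lengthens a single terminal path while leaving the cycle length, the number of terminal vertices, and the degree of the unique major vertex unchanged, so the resulting graph is again a member of $\mathcal{C}_1$ with the same parameters $a=r-1$ and $c=2t-2r+3$. Consequently Proposition \ref{prop-C_1} applies verbatim to give $pd_s(G)=r$ and $dim_s(G)=t$, while the order is now exactly $n$. The one step to justify carefully is this invariance under subdivision, namely that lengthening a pendant path neither creates a second major vertex nor alters $ter(v_0)$, so that the defining conditions of $\mathcal{C}_1$ persist; everything else is arithmetic.
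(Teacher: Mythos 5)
Your proposal is correct and follows essentially the same route as the paper: both take the member of $\mathcal{C}_1$ with $r-1$ terminal vertices and odd cycle of length $2(t-r+1)+1=2t-2r+3$, pad the order to exactly $n$ by subdividing an edge on a terminal path (adding $n-2t+r-2$ vertices, which is nonnegative precisely because $t\le\frac{n+r-2}{2}$), and then read off $pd_s=r$ and $dim_s=t$ from Proposition \ref{prop-C_1}. Your additional remark that the subdivision preserves membership in $\mathcal{C}_1$ is a point the paper leaves implicit, but it is the same argument.
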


\begin{proof}
To prove the result we will construct a graph $H$ in the following way. We begin with a graph $G(r-1,2(t-r+1)+1)\in \mathcal{C}_1$. Suppose that $v$ is the vertex of degree $r+1$ in the unique cycle of $G(r-1,2(t-r+1)+1)$ and let $u$ be a terminal vertex of $v$. Then to obtain the graph $H$ we subdivide one of the edges of the shortest $u-v$ path by adding $n-2t+r-2$ vertices. Now, notice that the order of $H$ is $n-2t+r-2+2(t-r+1)+1)+r-1=n$ and according to Proposition \ref{prop-C_1} we have that $pd_s(H)=r$ and $dim_s(H)=r-1+\frac{2(t-r+1)+1-1}{2}=t$.
\end{proof}

The above result immediately brings up another question. Is it the case that $dim_s(G)\le \displaystyle\frac{pd_s(G)+n-2}{2}$ for every nontrivial connected graph $G$ of order $n$? For instance, given a graph $G$ of order $n$, if $dim_s(G)\le \frac{n}{2}$, then we have that $2\cdot dim_s(G)-pd_s(G)\le n-pd_s(G)\le n-2$, which leads to $dim_s(G)\le \displaystyle\frac{pd_s(G)+n-2}{2}$.

\section{Exact values for the strong partition dimension of some families of graphs}

We first notice that Remark \ref{known-G_SR}, Theorem \ref{bound-cover-pd} (or Theorem \ref{bound-dim-pd}) and Corollary \ref{coro-pd-clique} lead to the following results.

\begin{theorem}\label{easy-results}
If $\partial(G)=\sigma(G)$, then $pd_s(G)=|\partial(G)|$. In particular,
\begin{enumerate}[{\rm (i)}]
\item for any positive integer $n$, $pd_s(K_n)= n$,
\item for any tree $T$ with $l(T)$ leaves, $pd_s(T)=l(T)$,
\item for any connected block graph $G$ of order $n$ and $c$ cut vertices, $pd_s(G)=n-c$.
\end{enumerate}
\end{theorem}

We continue with a remark which will be useful to present other results. We consider a vertex partition $P(r,t)=\{A_1,A_2,...,A_r,B_1,B_2,...,B_t\}$ of the vertex set of a graph such that every $A_i$ induces a shortest path $a_{i1}\sim a_{i2}\sim...\sim a_{ir_i}$ in $G$ and every $B_i$ induces an isolated vertex. Hence, it is straightforward to observe that the partition $\Pi(r,t)=\{B_1,B_2,...,B_t,A_1-\{a_{11}\},A_2-\{a_{21}\},...,A_r-\{a_{r1}\},\{a_{11}\},\{a_{21}\},...,\{a_{r1}\}\}$ is a strong resolving partition for $G$ of cardinality $2r+t$. Now, a partition $\Pi(r,t)$ is a $\mathcal{P}_1$-partition for $G$ if it is satisfied that $2r+t$ has a minimum value among all possible $P(r,t)$ partitions of $G$. Thus, we have the following result.

\begin{remark}\label{P_1-partition}
Let $G$ be a connected graph and let $\Pi(r,t)$ be a $\mathcal{P}_1$-partition for $G$. Then $pd_s(G)\le 2r+t$.
\end{remark}

\begin{corollary}\label{bound-n-diam}
For any connected graph $G$ of order $n$ and diameter $d$, $pd_s(G)\le n-d+1$.
\end{corollary}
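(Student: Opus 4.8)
The plan is to exhibit one explicit $P(r,t)$ partition of $V(G)$ whose associated strong resolving partition has cardinality $n-d+1$, and then to invoke Remark \ref{P_1-partition}. First I would fix two vertices $u,v\in V(G)$ with $d_G(u,v)=d$, which exist because $d$ is the diameter, and take a shortest $u-v$ path $P$. Since $P$ is a shortest path, it has exactly $d+1$ vertices and, moreover, the subgraph it induces is precisely the path itself (any chord would yield a shorter $u-v$ walk, contradicting shortestness); hence $V(P)$ is a legitimate path-block in the sense required by the definition of a $P(r,t)$ partition.

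Next I would build the partition $P(1,n-d-1)$ by letting $A_1=V(P)$ be the single path-block and declaring each of the remaining $n-(d+1)=n-d-1$ vertices to be its own singleton block $B_i$; every such $B_i$ trivially induces an isolated vertex. This is a valid $P(r,t)$ partition with $r=1$ and $t=n-d-1$.

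Finally, because every $P(r,t)$ partition produces, through the construction preceding Remark \ref{P_1-partition}, a strong resolving partition of cardinality $2r+t$, a $\mathcal{P}_1$-partition minimizes this quantity and therefore satisfies $2r+t\le 2\cdot 1+(n-d-1)=n-d+1$. Applying Remark \ref{P_1-partition} then yields $pd_s(G)\le n-d+1$, as claimed. There is essentially no serious obstacle in this argument; the only point that requires a moment's care is verifying that a shortest path is an \emph{induced} path, so that it genuinely qualifies as a path-block, together with the elementary count $2(1)+(n-d-1)=n-d+1$. (As a sanity check, when $d=n-1$ the graph is a path, $t=0$, and the bound reduces to the known equality $pd_s(G)=2$.)
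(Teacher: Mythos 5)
Your proposal is correct and matches the paper's intended argument exactly: Corollary \ref{bound-n-diam} is stated as an immediate consequence of Remark \ref{P_1-partition}, obtained precisely by taking a diametral shortest path (with its $d+1$ vertices, necessarily induced) as the single path-block $A_1$ and the remaining $n-d-1$ vertices as singletons, giving $2r+t = 2+(n-d-1) = n-d+1$. Your additional care about the chordlessness of a shortest path and the sanity check at $d=n-1$ are fine but not points of divergence.
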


Notice that the above bound is tight. For instance, for complete graphs, path graphs and star graphs. Also, note that if a graph $G$ has order $n$ and $pd_s(G)=n-1$, then as a consequence of Corollary \ref{bound-n-diam} it follows that $D(G)=2$. On the other hand, if $G$ has a vertex partition $\Pi(r)=\{A_1,A_2,...,A_r,B\}$ such that $B$ is an isolated vertex and for every $i\in \{1,...,r\}$, $A_i\cup B$ induces a shortest path in $G$, then it is straightforward to observe that the partition $\Pi(r)$ is a strong resolving partition for $G$ of cardinality $r+1$. A partition $\Pi(r)$ of minimum cardinality in $G$ is a $\mathcal{P}_2$-partition for $G$. Thus, we have the following result.

\begin{remark}\label{P_2-partition}
Let $G$ be a connected graph. If $G$ has a $\mathcal{P}_2$-partition, then $pd_s(G)\le r+1$.
\end{remark}

Notice that there are several graphs having such kind of partition. For instance, star graphs with subdivided edges and the sphere graphs $S_{k,r}$ ($k,r\ge 2$), where $S_{k,r}$ is a graph  defined as follows: we consider $r$ path graphs of order $k+1$ and we identify one extreme of each one of the $r$ path graphs in one pole $a$ and all the other extreme vertices of the paths in a pole $b$. In particular, $S_{k,2}$ is a cycle graph. The case of cycle graphs also shows that the bound is tight (see Proposition \ref{pd-cycle}).

Next we characterize the families of graphs achieving some specific values for the strong partition dimension.

\begin{theorem}\label{pd-n-complete}
Let $G$ be a connected graph of order $n$. Then $pd_s(G)=n$ if and only if $G$ is a complete graph.
\end{theorem}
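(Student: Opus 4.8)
The plan is to prove the two implications separately, handling the forward direction by contraposition. The reverse implication is immediate from earlier results: if $G=K_n$, then $(K_n)_{SR}\cong K_n$ by Remark \ref{known-G_SR}(i), so Corollary \ref{coro-pd-clique} gives $pd_s(K_n)\ge \omega((K_n)_{SR})=n$; combined with the trivial observation that the all-singletons partition is always a strong resolving partition (no two distinct vertices share a set, so the defining condition holds vacuously), we get $pd_s(G)\le n$ for every graph of order $n$, and hence $pd_s(K_n)=n$. This is exactly the content of Theorem \ref{easy-results}(i), which I would simply cite.

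For the forward direction I would argue contrapositively: assuming $G$ is connected but \emph{not} complete, I will exhibit a strong resolving partition of cardinality $n-1$, which forces $pd_s(G)\le n-1<n$. Since $G$ is not complete it has two non-adjacent vertices, so its diameter satisfies $D(G)\ge 2$. I would then pick vertices $u,v$ with $d_G(u,v)=2$ and an internal vertex $m$ of a shortest $u$–$v$ path, giving three \emph{distinct} vertices with $d_G(u,m)=d_G(m,v)=1$ and $d_G(u,v)=2$.

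The construction is to merge only the pair $\{u,m\}$ and leave every remaining vertex as its own singleton, that is $\Pi=\{\{u,m\}\}\cup\{\{z\}:z\in V\setminus\{u,m\}\}$, a partition of cardinality $n-1$. The verification that $\Pi$ is a strong resolving partition is short: the only set of $\Pi$ containing two distinct vertices is $\{u,m\}$, so the single pair that must be strongly resolved is $(u,m)$. Taking the set $W=\{v\}\in\Pi$ (available precisely because $v\notin\{u,m\}$), one computes $d_G(u,W)=d_G(u,v)=2=1+1=d_G(u,m)+d_G(m,v)=d_G(u,m)+d_G(m,W)$, so $W$ strongly resolves $u$ and $m$. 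Hence $\Pi$ is a strong resolving partition and $pd_s(G)\le n-1$, completing the contrapositive.

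The argument is elementary, so rather than a genuine obstacle the one point deserving care is bookkeeping: I must confirm that merging exactly one pair creates exactly one same-part pair to be checked, and that the chosen landmark $v$ is distinct from both $u$ and $m$ so that $\{v\}$ is genuinely a separate set of $\Pi$. Both facts are guaranteed by choosing $u,m,v$ along a geodesic of length two, which is exactly where the hypothesis that $G$ is not complete (equivalently $D(G)\ge 2$) enters.
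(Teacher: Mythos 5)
Your proposal is correct and takes essentially the same route as the paper: the paper proves the forward direction by citing Corollary \ref{bound-n-diam} (namely $pd_s(G)\le n-D(G)+1$, so $pd_s(G)=n$ forces $D(G)=1$), and your explicit partition $\Pi=\{\{u,m\}\}\cup\{\{z\}:z\in V\setminus\{u,m\}\}$ resolved by the landmark set $\{v\}$ is exactly the construction underlying that corollary, specialized to a geodesic of length two. The reverse direction you handle by citing Theorem \ref{easy-results}(i), which is also what the paper implicitly relies on.
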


\begin{proof}
If $pd_s(G)=n$, then by Corollary \ref{bound-n-diam} we have that $D(G)=1$ and, as a consequence, $G$ is a complete graph.
\end{proof}

Next result is useful to give a characterization of graphs of order $n$ having strong partition dimension $n-1$.

\begin{remark}\label{C_5-o-sphere-n-2}
Let $G$ be a connected graph of order $n$. If $G$ has an induced subgraph isomorphic to $C_5$ or $S_{2,3}$ or $K_1+C_4$, then $pd_s(G)\le n-2$.
\end{remark}

\begin{proof}
Since the subgraphs considered has order five, we suppose $\{u_1,u_2,....,u_{n-5}\}$ are the vertices of $G$ not belonging to the corresponding subgraph.

Let the cycle $C_5=v_1v_2...v_5v_1$ be an induced subgraph in $G$. Hence, it is straightforward to observe that the vertex partition $\Pi=\{\{v_1\},\{v_2,v_3\},\{v_4,v_5\},\{u_1\},\{u_2\},...,\{u_{n-5}\}\}$ is a strong resolving partition for $G$ of cardinality $n-2$.

Let $xy_1z$, $xy_2z$ and $xy_3z$ be the paths of the induced subgraph $S_{2,3}$ in $G$. As above the vertex partition $\Pi=\{\{y_2\},\{y_1,x\},\{y_3,z\},\{u_1\},\{u_2\},...,\{u_{n-5}\}\}$ is a strong resolving partition for $G$ of cardinality $n-2$.

Let $C_4=x_1x_2x_3x_4x_1$ be an induced subgraph in $G$ and let $v$ be a vertex of $G$, such that $v\ne x_1,x_2,x_3,x_4$ and $\langle \{v,x_1,x_2,x_3,x_4\}\rangle$ is isomorphic to the graph $K_1+C_4$.  Then, the vertex partition $\Pi=\{\{v,x_1,x_2\},\{x_3\},\{x_4\},\{u_1\},\{u_2\},...,\{u_{n-5}\}\}$ is a strong resolving partition for $G$ of cardinality $n-2$.
\end{proof}

\begin{theorem}
Let $G$ be a connected graph of order $n$. Then $pd_s(G)=n-1$ if and only if $G\cong P_3$, $G\cong C_4$, $G\cong K_n-e$\footnote{$K_n-e$ is the graph obtained from $K_n$ by deleting one edge.} or $G\cong K_1+\bigcup_{i}K_{n_i}$, $i>1$, $n_i\ge 1$ for every $i$ and $\sum_{i}n_i=n-1$.
\end{theorem}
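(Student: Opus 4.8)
The plan is to work entirely inside diameter-$2$ graphs, which is justified because the remark following Corollary~\ref{bound-n-diam} gives $D(G)=2$ as soon as $pd_s(G)=n-1$, and the contrapositive of Remark~\ref{C_5-o-sphere-n-2} then forbids induced copies of $C_5$, $S_{2,3}$ and $K_1+C_4$. First I would record two elementary facts about a graph $G$ of diameter $2$. Every set of a strong resolving partition induces a clique: if $x,y$ share a set and $d(x,y)=2=D(G)$, then $x,y$ are mutually maximally distant and hence lie in different sets by the Claim preceding Corollary~\ref{coro-pd-clique}, a contradiction; so same-set vertices are adjacent. Consequently every distance in play is $1$ or $2$, and two adjacent vertices $x,y$ of a common set are strongly resolved by a set $U$ (with $x,y\notin U$) exactly when precisely one of $x,y$ has a neighbour in $U$. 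I would also single out \emph{true twins}: an adjacent pair $x,y$ such that every vertex other than $x,y$ is adjacent to both or to neither; every neighbour of such an $x$ is within distance $1$ of $y$ and conversely, so true twins are mutually maximally distant and can never share a set.

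Since $D(G)=2$ already forces $pd_s(G)\le n-1$ by Corollary~\ref{bound-n-diam}, proving $pd_s(G)=n-1$ amounts to excluding every strong resolving partition into $n-2$ sets. As each set is a clique, such a partition has one of two shapes: (a) one triangle plus singletons, or (b) two disjoint edges plus singletons. The resolving criterion above shows shape (a) is realizable iff $G$ has a triangle \emph{containing no true-twin pair} (a pair inside the triangle is unresolvable from outside precisely when it is a true-twin pair), and shape (b) is realizable iff $G$ has two disjoint edges each resolved in the two-edge partition. For the \textbf{if} direction I would then check that each listed graph blocks both shapes. For $P_3$ and $C_4$ there is no triangle, so (a) is vacuous, and (b) fails by inspection (in $C_4$ the only two candidate pairs of disjoint edges are its two perfect matchings, and in each the other edge-class is adjacent to both endpoints of a given edge). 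For $K_n-e$ and $K_1+\bigcup_i K_{n_i}$, two vertices lying together in a maximal clique and avoiding the ``special'' vertices (the two ends of the missing edge, resp.\ the apex) are true twins; hence every triangle contains a true-twin pair, blocking shape (a), and every set has size at most $2$. A short count then rules out shape (b): in $K_1+\bigcup_i K_{n_i}$ every doubleton must contain the single apex, so two disjoint doubletons are impossible, while in $K_n-e$ two disjoint doubletons can never both be resolved.

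For the \textbf{only if} direction, assume $pd_s(G)=n-1$, so $D(G)=2$ and the three induced subgraphs are excluded. I would split on whether $G$ contains a triangle. If $G$ is triangle-free, the classification lemma below applies to $G$ itself. If $G$ has a triangle, then since shape (a) is not realizable \emph{every} triangle of $G$ contains a true-twin pair; contracting each class of true twins to a single vertex yields a quotient $G^{*}$ which is triangle-free, because a triangle of $G^{*}$ would lift to a triangle of $G$ on three pairwise non-twin vertices. Contraction of cliques preserves distances and lifts induced copies of $C_5$, $S_{2,3}$ and $K_1+C_4$, so $G^{*}$ is again a connected triangle-free graph of diameter $2$ with none of the three forbidden induced subgraphs, and the lemma applies to $G^{*}$.

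The lemma is: \emph{a connected triangle-free graph of diameter $2$ on at least three vertices with no induced $C_5$ and no induced $S_{2,3}\cong K_{2,3}$ is either $C_4$ or a star $K_{1,m}$.} I would prove it by noting that a universal vertex forces a star (its neighbours are independent by triangle-freeness, and there are no other vertices), while in the absence of a universal vertex the facts that every non-adjacent pair has at most two common neighbours (otherwise an induced $K_{2,3}$ appears) and that no induced $C_5$ is available collapse the graph to $C_4$. Granting this, $G$ (triangle-free case) or $G^{*}$ (triangle case) is $C_4$ or a star; hence $G$ is a blow-up (each vertex replaced by a clique) of $C_4$ or of a star, that is a blow-up of $C_4$ or $G\cong K_s+\bigcup_{j=1}^m K_{n_j}$. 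A final pairing computation on these blow-ups finishes the proof: a nontrivial $C_4$-blow-up admits two independent resolvable edges, so only $G\cong C_4$ survives there, and among star-blow-ups the same pairing argument shows $pd_s(G)=n-1$ forces $s=1$ (giving $G\cong K_1+\bigcup_i K_{n_i}$ with $i=m>1$) or $m=2$ with $n_1=n_2=1$ (giving $G\cong K_n-e$, of which $P_3$ is the case $s=1$), every other blow-up yielding a shape-(b) partition with $n-2$ sets. I expect the main obstacle to be exactly this lemma together with the blow-up bookkeeping: one must verify that precisely the borderline graphs—the single apex ($s=1$) or the single removed edge—fail to contain two simultaneously resolvable disjoint edges, and it is here that the hypotheses $i>1$ and ``only one edge deleted'' get pinned down.
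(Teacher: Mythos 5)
Your proposal is correct, and it takes a genuinely different route from the paper's proof. The two arguments share only the opening reductions: both use Corollary~\ref{bound-n-diam} to force $D(G)=2$ and Remark~\ref{C_5-o-sphere-n-2} to forbid induced copies of $C_5$, $S_{2,3}$ and $K_1+C_4$. From there the paper argues through the strong resolving graph: Theorem~\ref{bound-cover-pd} gives $\alpha(G_{SR})\ge n-2$, hence $|\partial(G)|\in\{n-1,n\}$, and the proof splits on this and on whether $\Delta(G)=n-1$, killing the case $\Delta(G)<n-1$ by exhibiting either a forbidden induced subgraph or an explicit $(n-2)$-partition, and reading off $K_n-e$ from $\Delta(G)=n-1$ and $K_1+\bigcup_i K_{n_i}$ from $|\partial(G)|=n-1$; notably the paper proves only the ``only if'' direction. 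You instead observe that in a diameter-two graph every class of a strong resolving partition is a clique, so that an $(n-2)$-class partition must be ``triangle plus singletons'' or ``two disjoint edges plus singletons'', reformulate resolvability of adjacent pairs via true twins, pass to the quotient by twin classes, and reduce everything to a classification lemma for triangle-free diameter-two graphs without induced $C_5$ or $K_{2,3}$ (namely $C_4$ and stars), finishing with blow-up bookkeeping. Your route is more structural, explains \emph{why} the extremal graphs are exactly clique blow-ups of $C_4$ and of stars, and, unlike the paper, also establishes the ``if'' direction (that the listed graphs really have $pd_s(G)=n-1$), which the paper's proof omits. Three details you must still write out, all of which do hold: (1) reducing ``$pd_s(G)\le n-2$'' to the existence of a partition with \emph{exactly} $n-2$ classes needs the easy lemma that any refinement of a strong resolving partition is again strong resolving; (2) the classification lemma needs an actual proof, e.g.\ girth must be $4$ or $5$ under diameter two, girth $5$ yields an induced $C_5$, and a vertex outside an induced $C_4$ adjacent to two of its vertices creates a triangle or an induced $K_{2,3}$, while one adjacent to exactly one creates an induced $C_5$, so no outside vertex exists; (3) in the $C_4$-blow-up step the na\"{\i}ve ``opposite edges'' choice fails when only one class is enlarged; both doubletons must meet the enlarged class, after which any vertex of the class opposite to it resolves both pairs.
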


\begin{proof}
Suppose $pd_s(G)=n-1$. By Theorem \ref{bound-cover-pd}, we have that $\alpha(G_{SR})\ge n-2$. Also, by Corollary \ref{bound-n-diam} we have that $D(G)=2$. We consider the following cases.\\

\noindent
{\bf Case 1.} The order of $G_{SR}$ is $|\partial(G)|=n$. Hence, there exists two non adjacent vertices $u,v$ in $G_{SR}$ which means $u,v$ are not mutually maximally distant in $G$ and also there exist $u',v'$ such that $u,u'$ and $v,v'$ are two pairs of mutually maximally distant vertices. If $n=3$, then $G\cong P_3$. If $n=4$, then we can observe that $G\cong C_4$, $G\cong K_4-e$, $G\cong S_{1,3}$ (the star graph with tree leaves) or $G\cong K_1+(K_1\cup K_2)$. Now on in this case, we suppose $n\ge 5$. We consider the following subcases.\\

\noindent
{\bf Case 1.1:} $\Delta(G)<n-1$. Hence, for every vertex $x$ of $G$ there exists a vertex $z$ such that they are not adjacent. Moreover, since $D(G)=2$, for every two non adjacent vertices $x,z$, there exists a vertex $y$ such that $xyz$ form a shortest $x-z$ path. Now, for the vertex $y$ there exists $w\ne x,z$ such that $y\not\sim w$. Let $yy'w$ be a shortest $y-w$ path. We have the following cases.\\

\noindent
{\bf (a)} If $y'\ne x$ and $y'\ne z$, then we have that $w\sim x$ or $w\sim z$ or there exist $x',z'$ such that $wx'x$ and $wz'z$ form a $w-x$ path and a $w-z$ path respectively. So, we have either:
\begin{itemize}
\item $xyy'wx'x$ induce a cycle $C_5$, in which case the Remark \ref{C_5-o-sphere-n-2} leads to a contradiction.
\item $zyy'wz'z$ induce a cycle $C_5$, and again the Remark \ref{C_5-o-sphere-n-2} leads to a contradiction.
\item $xyy'wx$ induce a cycle $C_4$ and $zyy'wz$ induce a cycle $C_4$. Thus both cycles together induces a sphere graph $S_{2,3}$ and by Remark \ref{C_5-o-sphere-n-2} we have a contradiction.
\end{itemize}

\noindent
{\bf (b)} If $y'=x$, then $y'\ne z$. Since $D(G)=2$ we have that if $w\not\sim z$, then there exists $z'$ such that $wz'z$ form a shortest $w-z$ path. As a consequence, $xyzz'wx$ is an induced subgraph of $G$ isomorphic to $C_5$, in which case the Remark \ref{C_5-o-sphere-n-2} leads to a contradiction. Thus, $w\sim z$. Since $n\ge 5$, there exists at least a vertex $a\ne x,y,z,w$ in $G$. Let $\{u_1,u_2,....,u_{n-5}\}$ be the other vertices of $G$ different from $x,y,z,w,a$. We consider the following cases.
\begin{itemize}
\item The vertex $a$ is adjacent to every vertex $x,y,z,w$. In this case $\langle \{a,x,y,z,w\}\rangle$ is isomorphic to $K_1+C_4$ and by Remark \ref{C_5-o-sphere-n-2} we have a contradiction.
\item The vertex $a$ is adjacent to three vertices, say $x,z,w$. Hence, the vertex partition $\Pi=\{\{a,x\},\{w,z\},\{y\},\{u_1\},\{u_2\},...,\{u_{n-5}\}\}$ is a strong resolving partition for $G$ of cardinality $n-2$, a contradiction.
\item The vertex $a$ is adjacent to two adjacent vertices, say $x,y$. We have a contradiction again since the vertex partition $\Pi=\{\{z,y\},\{w,x\},\{a\},\{u_1\},\{u_2\},...,\{u_{n-5}\}\}$ is a strong resolving partition for $G$ of cardinality $n-2$.
\item The vertex $a$ is adjacent to two non adjacent, say $x,z$. So, the vertex partition $\Pi=\{\{w,x\},\{y,z\},\{a\},\{u_1\},\{u_2\},...,\{u_{n-5}\}\}$ is a strong resolving partition for $G$ of cardinality $n-2$, a contradiction.
\item The vertex $a$ is adjacent to only one vertex, say $x$. Also a contradiction, since the vertex partition $\Pi=\{\{y,z\},\{a,x\},\{w\},\{u_1\},\{u_2\},...,\{u_{n-5}\}\}$ is a strong resolving partition for $G$ of cardinality $n-2$.
\end{itemize}

\noindent
{\bf (c)} If $y'=z$, then $y'\ne x$ and we can proceed analogously to the above case (b) to obtain a contradiction.\\

\noindent
{\bf Case 1.2:} $\Delta(G)=n-1$. Let $u$ be a vertex such that $\delta(u)=n-1$. Since $|\partial(G)|=n$, there exists $u'$ such that $u,u'$ are mutually maximally distant, which means that $d(u',x)\le d(u',u)=1$ for every $x\in N(u)$. Thus, $u'\sim x$ for every $x\in N(u)$, which means that also $\delta(u')=n-1$. Since $n\ge 5$ there exist at least three different vertices $a,b,c\in N(u)-\{u'\}$. Let $\{u_1,u_2,...,u_{n-5}\}$ the other vertices of $G$. If $a\not\sim b$ and $b\not\sim c$, then the vertex partition $\Pi=\{\{a,u\},\{c,u'\},\{b\},\{u_1\},\{u_2\},\{u_{n-5}\}\}$ is a strong resolving partition for $G$ of cardinality $n-2$, which is a contradiction. Thus, at most two of the vertices $a,b,c$ are non adjacent. If $n=5$, then $G\cong K_5-e$. Suppose $n>5$. If there exist two different pairs of vertices $a,b$ and $c,d$ ($a\ne b\ne c\ne d$) such that $a\not\sim b$ and $c\not\sim d$ and $\{u_1,u_2,...,u_{n-6}\}$ are the other vertices of $G$, then the vertex partition $\Pi=\{\{a,u\},\{b\},\{c,u'\},\{d\},\{u_1\},\{u_2\},\{u_{n-6}\}\}$ is a strong resolving partition for $G$ of cardinality $n-2$, which is a contradiction. Thus, at most there exist one pair of vertices in $G$ being non adjacent and, as a consequence, we obtain that $G\cong K_n-e$.\\

\noindent
{\bf Case 2.} The order of $G_{SR}$ is $|\partial(G)|=n-1$. Hence $\alpha(G_{SR})\le n-2$ and we obtain that $\alpha(G_{SR})=n-2$. Thus $G_{SR}$ is isomorphic to $K_{n-1}$. So, there is only one vertex $v$ in $G$ which is not mutually maximally distant with any other vertex of $G$ and every pair $x,y$ of different vertices of $V(G)-\left\{v\right\}$ are mutually maximally distant between them. If $\delta(v)<n-1$, then there exists a vertex $u$ such that $v\not\sim u$. Let $vx_1x_2...x_nu$ be a shortest $v-u$ path. Since $u$ is mutually maximally distant with every vertex in $V(G)-\left\{v\right\}$, particularly, $u$ is mutually maximally distant with $x_1,x_2,...,x_n$ and this is a contradiction. Thus, $\delta(v)=n-1$ and, as a consequence, $G$ is isomorphic to a graph $K_1+\bigcup_{i}G_{n_i}$ where $K_1=\langle \left\{v\right\}\rangle$ and $G_{n_i}$ is a connected graph of order $n_i$ for every $i$. If $i=1$, then $G$ is a complete graph and $pd_s(G)=n$, a contradiction. So, $i>1$. Now, suppose there exists $n_j$ such that $G_{n_j}$ is not a complete graph. Hence, there exist two different vertices $a,b$ in $G_{n_j}$ such that $a\not\sim b$. Notice that $d(a,b)=2$. Since $G_{n_j}$ is connected, there exists a shortest $a-b$ path, say $ayb$, in $G_{n_j}$. Also, as every two vertices in $V(G)-\{v\}$ are mutually maximally distant between them, we have that $a,y,b$ are mutually maximally distant between them, which leads to a contradiction. Therefore, $G$ is isomorphic to a graph $K_1+\bigcup_{i}G_{n_i}$ where $K_1=\langle \{v\}\rangle$ and $G_{n_i}$ is a complete graph of order $n_i$ for every $i$.
\end{proof}

\begin{theorem}\label{path-only-two}
Let $G$ be a connected graph. Then $pd_s(G)=2$ if and only if $G$ is a path.
\end{theorem}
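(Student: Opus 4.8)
The plan is to prove both implications, with the forward (sufficiency) direction being immediate and the converse resting on a single structural observation about two-element strong resolving partitions. For the easy direction, suppose $G=P_n$ is a path with consecutive vertices $v_1v_2\cdots v_n$. I would exhibit the partition $\Pi=\{\{v_1\},V\setminus\{v_1\}\}$ and verify it is a strong resolving partition: the only set containing two distinct vertices is $V\setminus\{v_1\}$, and for any $v_i,v_j$ with $1<i<j$ the set $\{v_1\}$ strongly resolves them, since the path is a geodesic and hence $d(v_j,\{v_1\})=j-1=(j-i)+(i-1)=d(v_j,v_i)+d(v_i,\{v_1\})$. As $pd_s(G)\ge 2$ always, this gives $pd_s(P_n)=2$. (Alternatively one may invoke $(P_n)_{SR}\cong K_2$ from Remark~\ref{known-G_SR} together with Theorem~\ref{bound-cover-pd}.)

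For the converse, let $\Pi=\{U_1,U_2\}$ be a strong resolving partition of $G$. The key observation I would establish first is an injectivity lemma: the map $y\mapsto d(y,U_2)$ is injective on $U_1$, and symmetrically $z\mapsto d(z,U_1)$ is injective on $U_2$. Indeed, if $x,y\in U_1$ were distinct with $d(x,U_2)=d(y,U_2)$, then the only member of $\Pi$ avoiding both $x$ and $y$ is $U_2$, so $U_2$ would have to strongly resolve them; but $d(x,U_2)=d(x,y)+d(y,U_2)$ (or the symmetric equality) then forces $d(x,y)=0$, a contradiction.

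From this lemma the whole structure follows quickly. Since $G$ is connected there is at least one edge between $U_1$ and $U_2$; its $U_1$-endpoint lies at distance $1$ from $U_2$ and, by injectivity, is the unique such vertex $a_1\in U_1$, and likewise there is a unique $a_2\in U_2$ at distance $1$ from $U_1$. Hence every edge joining the two parts must be $a_1a_2$, so $a_1a_2$ is a bridge and it is the only edge between the parts; in particular $d(y,U_2)=d(y,a_1)+1$ for every $y\in U_1$ and $G[U_1]$ is connected. Therefore the distances from $a_1$ are pairwise distinct on $U_1$, which in a connected graph forces the distance levels from $a_1$ to be singletons realizing the values $0,1,\dots,|U_1|-1$ with no chords, so $G[U_1]$ is a path rooted at $a_1$. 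The same argument makes $G[U_2]$ a path rooted at $a_2$, and gluing the two along the bridge $a_1a_2$ shows $G$ is a path.

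The heart of the argument, and the step I expect to be the main obstacle, is recognizing and proving the injectivity lemma; once equidistance within a part is forbidden, the uniqueness of the crossing edge and the path structure of each side are routine. As a sanity check, this is consistent with Theorem~\ref{easy-results}(ii), which already forces any \emph{tree} with $pd_s=2$ to be a path; the argument above additionally rules out every graph containing a cycle, since such a graph cannot have a single crossing bridge separating two rooted paths.
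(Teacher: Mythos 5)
Your proof is correct and follows essentially the same route as the paper: the same key injectivity observation (two vertices of $U_1$ at equal distance from $U_2$ cannot be strongly resolved, since the resolving equality would force $d(x,y)=0$), followed by the uniqueness of the crossing edge and the reconstruction of $G$ as a path. Your version merely spells out the final structural step (bridge plus two rooted paths) in more detail than the paper's terse conclusion, and your forward direction verifies a partition directly instead of citing $dim_s(P_n)=1$ with Theorem~\ref{bound-dim-pd}, but these are cosmetic differences.
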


\begin{proof}
If $G$ is a path, then $dim_s(G)=1$. Since $pd_s(G)\ge 2$ for every graph, by Theorem \ref{bound-dim-pd} we have that $pd_s(G)=2$. On the contrary, suppose that $pd_s(G)=2$ and let $\Pi=\left\{U_1,U_2\right\}$ be a strong partition basis of $G$. Let $x,y$ be two vertices belonging to the same set of the partition, say $x,y\in U_1$. If $d(x,U_2)=d(y,U_2)$, then $x,y$ are not strongly resolved by $\Pi$, which is a contradiction. Thus, $d(u,U_2)\ne d(v,U_2)$ for every pair of vertices of $U_1$. Analogously $d(u,U_1)\ne d(v,U_1)$ for every pair of vertices of $U_2$. Since $G$ is connected there exists only one vertex of $U_1$ adjacent to a vertex of $U_2$ and viceversa. So, the distances between vertices of $U_2$ and the set $U_1$ take every possible values in the set $\left\{1,...,|U_2|\right\}$ and, analogously, the distances between vertices of $U_1$ and the set $U_2$ take every possible values in the set $\left\{1,...,|U_1|\right\}$. Therefore $G$ is a path.
\end{proof}

As a consequence of the above characterization, if $G$ is a graph different from a path such that $\omega(G_{SR})=2$, then Corollary \ref{coro-pd-clique} can be improved at least by one.

\begin{remark}\label{pd-clique-3}
If $G$ is a connected graph different from the path graph such that $\omega(G_{SR})=2$, then $pd_s(G)\ge \omega(G_{SR})+1$.
\end{remark}

\begin{proof}
For any connected graph $G$, $pd_s(G)\ge \omega(G_{SR})$. Since $pd_s(G)=2$ if and only if $G$ is a path, it follows that $pd_s(G)\ge 3=\omega(G_{SR})+1$.
\end{proof}

In next results we give the exact value for the strong partition dimension of some families of graphs.

\begin{proposition}\label{pd-cycle}
For any cycle graph $C_n$, $pd_s(C_n)=3$.
\end{proposition}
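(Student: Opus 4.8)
The plan is to establish the two bounds $pd_s(C_n)\ge 3$ and $pd_s(C_n)\le 3$ separately and then conclude equality.

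For the lower bound I would simply note that $C_n$ is connected but is not a path. By Theorem \ref{path-only-two} this forces $pd_s(C_n)\ne 2$, and since $pd_s(G)\ge 2$ holds for every graph, we immediately get $pd_s(C_n)\ge 3$. (One could instead invoke Remark \ref{pd-clique-3} after reading off $\omega((C_n)_{SR})=2$ from Remark \ref{known-G_SR}, but that route forces a separate treatment of $C_3\cong K_3$, whose strong resolving graph is $K_3$ and has clique number $3$; the path characterization sidesteps this case entirely.)

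For the upper bound the strategy is to exhibit a partition of the shape $\Pi(r)$ described just before Remark \ref{P_2-partition} with $r=2$; since any such partition is a strong resolving partition of cardinality $r+1$, this yields $pd_s(C_n)\le 3$. Write $C_n=v_0v_1\cdots v_{n-1}v_0$, set $m=\lfloor n/2\rfloor$, and take the pole $B=\{v_0\}$ together with the two arcs $A_1=\{v_1,\dots,v_m\}$ and $A_2=\{v_{m+1},\dots,v_{n-1}\}$. These three sets clearly partition $V(C_n)$, and since $v_0$ is adjacent in $C_n$ only to $v_1$ and $v_{n-1}$, the induced subgraphs $\langle A_1\cup B\rangle$ and $\langle A_2\cup B\rangle$ are precisely the paths $v_0v_1\cdots v_m$ and $v_{m+1}\cdots v_{n-1}v_0$.

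The one point requiring genuine verification is that each of these two induced paths is a \emph{shortest} path in $C_n$, which is exactly what the construction preceding Remark \ref{P_2-partition} demands. The first arc has length $m=\lfloor n/2\rfloor$, which equals $d_{C_n}(v_0,v_m)=\min\{m,n-m\}=\lfloor n/2\rfloor$, so it is a geodesic. The second arc has length $n-m-1=\lceil n/2\rceil-1$; since $\lceil n/2\rceil-1\le\lfloor n/2\rfloor+1=m+1$, one gets $d_{C_n}(v_0,v_{m+1})=\min\{m+1,n-m-1\}=n-m-1$, so this arc is a geodesic as well. Hence $\{A_1,A_2,B\}$ is a strong resolving partition of cardinality $3$ and $pd_s(C_n)\le 3$, which together with the lower bound gives $pd_s(C_n)=3$. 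I expect the only real subtlety to be the parity bookkeeping in the floor/ceiling comparison for the second arc, together with checking the degenerate smallest case $n=3$, where $A_1$ and $A_2$ are singletons, the partition is the three singletons, and $C_3\cong K_3$ indeed satisfies $pd_s=3$.
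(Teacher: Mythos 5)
Your proposal is correct and takes essentially the same route as the paper: the upper bound uses the identical partition $\left\{\{v_0\},\{v_1,\dots,v_{\lfloor n/2\rfloor}\},\{v_{\lfloor n/2\rfloor+1},\dots,v_{n-1}\}\right\}$ via Remark~\ref{P_2-partition}, and your lower bound via Theorem~\ref{path-only-two} is the same content as the paper's appeal to Remark~\ref{pd-clique-3}, since that remark is itself proved from the path characterization. Your direct route is in fact slightly cleaner, because the paper's intermediate claim that $\omega((C_n)_{SR})=2$ fails at $n=3$ (where $(C_3)_{SR}\cong K_3$), a case your argument, as you note, handles uniformly.
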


\begin{proof}
Since the strong resolving graph of a cycle is either a cycle (if $n$ is odd) or a union of $n/2$ disjoint copies of $K_2$ (if $n$ is even), we have that $\omega((C_n)_{SR})=2$. So Remark \ref{pd-clique-3} leads to $pd_s(C_n)\ge 3$. On the other hand, let $V=\left\{v_0,v_1,...,v_{n-1}\right\}$ be the vertex of $C_n$, where two consecutive vertices (modulo $n$) are adjacent. Since $\Pi=\left\{\left\{v_0\right\},\left\{v_1,v_2,...,v_{\left\lfloor\frac{n}{2}\right\rfloor}\right\},
\left\{v_{\left\lfloor\frac{n}{2}\right\rfloor+1},...,v_{n-1}\right\}\right\}$ is a $\mathcal{P}_2$-partition for $C_n$ of cardinality three, by Remark \ref{P_2-partition} the result follows.
\end{proof}

We recall that the \emph{Cartesian product of two graphs} $G=(V_1,E_1)$ and $H=(V_2,E_2)$ is the graph $G\Box H$, such that $V(G\Box H)=V_1\times V_2$ and two vertices $(a,b),(c,d)$ are adjacent in  $G\Box H$ if and only if, either ($a=c$ and $bd\in E_2$) or ($b=d$ and $ac\in E_1$). Next we study the particular cases of grid graphs which are obtained as the Cartesian product of two paths.

\begin{theorem}
For any grid graph $P_m\Box P_n$ with $m,n\ge 2$, $pd_s(P_m\Box P_n)=3$.
\end{theorem}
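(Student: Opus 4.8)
The plan is to establish the lower bound $pd_s(P_m\Box P_n)\ge 3$ and the matching upper bound $pd_s(P_m\Box P_n)\le 3$ separately. For the lower bound, I would use Theorem~\ref{path-only-two}: since a grid $P_m\Box P_n$ with $m,n\ge 2$ is not a path, we have $pd_s(P_m\Box P_n)\ge 3$ immediately. This sidesteps any computation of $(P_m\Box P_n)_{SR}$ for the lower bound and is the shortest available route.

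For the upper bound, the plan is to exhibit an explicit strong resolving partition of cardinality three. I would coordinatize the grid as $V=\{(i,j): 0\le i\le m-1,\ 0\le j\le n-1\}$ with the usual $\ell_1$ (Manhattan) distance $d((i,j),(k,l))=|i-k|+|j-l|$, which holds because distances in a Cartesian product of paths add coordinatewise. The natural candidate is a $\mathcal{P}_2$-partition in the sense of Remark~\ref{P_2-partition}: I would single out one corner, say $B=\{(0,0)\}$, and then split the remaining vertices into two classes $A_1$ and $A_2$ so that $A_1\cup B$ and $A_2\cup B$ each induce a collection of shortest paths emanating from the pole $(0,0)$. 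Concretely, one can let $A_1$ collect vertices reached by going ``right then up'' and $A_2$ those reached ``up then right,'' i.e.\ partition by which monotone geodesic to the corner we assign each vertex to, ensuring each piece together with $B$ decomposes into shortest paths. Then Remark~\ref{P_2-partition} gives $pd_s(P_m\Box P_n)\le 3$.

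The main obstacle will be verifying that the chosen partition is genuinely a strong resolving partition, i.e.\ that any two vertices lying in the same part are strongly resolved by some part. For two vertices $x,y$ in the same part, I would need a part $W$ and a vertex $w\in W$ realizing $d(\cdot,W)$ such that $x,y,w$ are colinear in the $\ell_1$ sense, meaning $d(x,w)=d(x,y)+d(y,w)$ or symmetrically. The pole $(0,0)$ is the key tool here: if $x$ and $y$ sit on a common monotone geodesic toward $(0,0)$, then $(0,0)$ strongly resolves them because $d(x,B)=d(x,y)+d(y,B)$ holds by additivity of the Manhattan metric along monotone paths. The delicate point is the case where $x$ and $y$ are in the same part but \emph{not} on a common geodesic to the corner; I expect to handle these by checking that the other part then contains a vertex (for instance an opposite corner or a boundary vertex) with respect to which $x$ and $y$ become colinear, using that in a grid any two vertices are strongly resolved by a suitable extremal corner. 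I anticipate that a careful choice of the split between $A_1$ and $A_2$ can be arranged so that every same-part pair is automatically colinear toward $(0,0)$, which would make the verification routine rather than case-heavy; pinning down that clean split is the crux of the argument.
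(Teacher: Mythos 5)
Your lower bound is correct and is essentially the paper's: the grid is not a path, so Theorem~\ref{path-only-two} forces $pd_s(P_m\Box P_n)\ge 3$ (the paper phrases this through Remark~\ref{pd-clique-3} after computing $(P_m\Box P_n)_{SR}\cong K_2\cup K_2$, but the substance is the same). The upper bound, however, has a genuine gap, and the construction you outline cannot be repaired when $m,n\ge 3$. Remark~\ref{P_2-partition} requires each $A_i\cup B$ to induce a \emph{single} shortest path, not a ``collection of shortest paths emanating from the pole''; since a geodesic in $P_m\Box P_n$ has at most $m+n-1$ vertices, a partition $\{A_1,A_2,B\}$ of the required form forces $mn\le 2(m+n-2)+1$, i.e.\ $(m-2)(n-2)\le 1$, so no such partition exists once $m,n\ge 3$ (except possibly $P_3\Box P_3$, where a direct check shows that a corner pole still fails; your construction does go through when $\min(m,n)=2$). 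Your hoped-for ``clean split,'' in which every same-part pair is colinear toward the pole, is equally impossible: writing $x\le y$ for the coordinatewise order, the set $B=\{(0,0)\}$ strongly resolves $x,y$ exactly when $x$ and $y$ are comparable, yet the antidiagonal $\{(0,2),(1,1),(2,0)\}$ is an antichain of three vertices (present whenever $m,n\ge 3$), so by pigeonhole some $A_i$ must contain an incomparable pair, which $B$ cannot resolve.

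That forces you onto your fallback --- resolving such a pair $x,y$ by the \emph{other} part --- and here lies the conceptual error: you argue that the other part ``contains a vertex (for instance an opposite corner or a boundary vertex) with respect to which $x$ and $y$ become colinear.'' But a part $W$ strongly resolves $x,y$ only if $d(x,W)=d(x,y)+d(y,W)$, where $d(\cdot,W)$ is the \emph{minimum} distance over all of $W$; exhibiting one far-away $w\in W$ with $d(x,w)=d(x,y)+d(y,w)$ proves nothing if $W$ also contains vertices close to $x$ and $y$ --- and a part covering roughly half the grid will. This min-versus-element trap is precisely the subtlety the definition of strong resolving partition is built around (the paper flags it in the introduction), and the paper's proof is engineered to avoid it: it takes $\Pi=\left\{\{(u_1,v_1)\},\{(u_1,v_n)\},(V_1\times V_2)-\{(u_1,v_1),(u_1,v_n)\}\right\}$, i.e.\ two corners on a common side as \emph{singleton} parts plus one big part. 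Singletons make set distance equal vertex distance, only pairs inside the big part need checking, and a two-line computation with the $\ell_1$ metric shows that vertices with equal second coordinate are resolved by either corner, while vertices with different second coordinates are resolved by one of the two corners according to the relative order of their first coordinates. If you want to salvage your write-up, replace the single pole and two parts by these two singleton corners plus the rest; no single-pole variant can work for large grids.
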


\begin{proof}
Let $V_1=\left\{u_1,u_2,...,u_m\right\}$ and $V_2=\left\{v_1,v_2,...,v_n\right\}$ be the vertex sets of $P_m$ and $P_n$, respectively. Since for any two vertices $(u_i,v_j),(u_l,v_k)\in V_1\times V_2$, $d_{P_m\Box P_n}((u_i,v_j),(u_l,v_k))=d_{P_m}(u_i,u_l)+d_{P_n}(v_j,v_k)$ we have that $(u_i,v_j),(u_l,v_k)$ are mutually maximally distant in $P_m\Box P_n$ if and only if $u_i,u_l$ are mutually maximally distant in $P_m$ and $v_j,v_k$ are mutually maximally distant in $P_n$. So, we have that $\partial(P_m\Box P_n)=\left\{(u_1,v_1),(u_1,v_n),(u_m,v_1),(u_m,v_n)\right\}$ and $(P_m\Box P_n)_{SR}\cong \bigcup_{i=1}^2 K_2$. Thus $\omega((P_m\Box P_n)_{SR})=2$ and by Remark \ref{pd-clique-3} it follows $pd_s(P_m\Box P_n)\ge 3$.

Now, let $\Pi=\left\{\left\{(u_1,v_1)\right\},\left\{(u_1,v_n)\right\},(V_1\times V_2)-\left\{(u_1,v_1),(u_1,v_n)\right\}\right\}$ be a vertex partition of $P_m\Box P_n$. We shall prove that $\Pi$ is a strong resolving partition for $P_m\Box P_n$. Let $(u_i,v_j),(u_l,v_k)$ be two different vertices of $P_m\Box P_n$. We consider the following cases.

\noindent
Case 1. $j=k$. Hence, without loss of generality we suppose $i>l$. So, it is satisfied that
$$d_{P_m\Box P_n}((u_i,v_j),(u_1,v_1))=d_{P_m\Box P_n}((u_i,v_j),(u_l,v_k))+d_{P_m\Box P_n}((u_l,v_k),(u_1,v_1)),$$
and also,
$$d_{P_m\Box P_n}((u_i,v_j),(u_1,v_n))=d_{P_m\Box P_n}((u_i,v_j),(u_l,v_k))+d_{P_m\Box P_n}((u_l,v_k),(u_1,v_n)).$$
Thus the sets $\left\{(u_1,v_1)\right\}$ and $\left\{(u_1,v_n)\right\}$ strongly resolve $(u_i,v_j),(u_l,v_k)$.

\noindent
Case 2. $j\ne k$. Hence, without loss of generality let $j<k$. If $i\le l$, then we have that
$$d_{P_m\Box P_n}((u_l,v_k),(u_1,v_1))=d_{P_m\Box P_n}((u_l,v_k),(u_i,v_j))+d_{P_m\Box P_n}((u_i,v_j),(u_1,v_1))$$
and the set $\left\{(u_1,v_1)\right\}$ strongly resolves $(u_i,v_j),(u_l,v_k)$. On the contrary, if $i>l$, then we have that
$$d_{P_m\Box P_n}((u_i,v_j),(u_1,v_n))=d_{P_m\Box P_n}((u_i,v_j),(u_l,v_k))+d_{P_m\Box P_n}((u_l,v_k),(u_1,v_n))$$
and the set $\left\{(u_1,v_n)\right\}$ strongly resolves $(u_i,v_j),(u_l,v_k)$.

Therefore, $\Pi$ is a strong resolving partition for $P_m\Box P_n$ and the result follows.
\end{proof}

The {\em wheel graph} $W_{1,r}$ (respectively {\em fan graph} $F_{1,r}$) is the graph obtained from the graphs $K_1$ and $C_r$ (respectively $P_r$) by adding all possible edges between the vertices of $C_r$ (respectively $P_r$) and the vertex of $K_1$. Notice that $W_{1,r}=K_1+C_r$ and $F_{1,r}=K_1+P_r$. Next we obtain the strong partition dimension of $W_{1,r}$ and $F_{1,r}$. The vertex of $K_1$ is called the {\em central vertex} of the wheel or the fan.

\begin{remark}
For any wheel graph $W_{1,r}$, $r\ge 4$,
$$pd_s(W_{1,r})=\left\{\begin{array}{ll}
                           3, & \mbox{if $r=4$}, \\
                            &  \\
                           \left\lceil\frac{r}{2}\right\rceil, & \mbox{if $r\ge 5$}. \\
                         \end{array}\right.
$$
\end{remark}

\begin{proof}
Let $u$ be the central vertex and let $C=\left\{v_0,...,v_{r-1}\right\}$ be the set of vertices of the cycle used to construct $W_{1,r}$ (for every $i\in\left\{0,...,r-1\right\}$, $v_i\sim v_{i+1}$ where the operations with the subscripts are done modulo $r$). By doing simple calculation it is possible to check that if $r=4$, then $pd_s(W_{1,r})=3$. Now on we assume $r\ge 5$. Since the diameter of $W_{1,r}$ is two, any two non adjacent vertices of $C$ are mutually maximally distant between them. So, any two non adjacent vertices of $C$ belong to different sets of any strong resolving partition for $W_{1,r}$. Thus, every set of every strong resolving partition for $W_{1,r}$ must contain at most two vertices of $C$ and, as a consequence, $pd_s(W_{1,r})\ge \left\lceil\frac{r}{2}\right\rceil$.

Now, let the vertex partition $\Pi=\left\{\left\{u,v_0,v_1\right\},\left\{v_2,v_3\right\},...,\left\{v_{r-2},v_{r-1}\right\}\right\}$ if $r$ is even or $\Pi=\left\{\left\{u,v_0,v_1\right\},\left\{v_2,v_3\right\},...,\left\{v_{r-3},v_{r-2}\right\},\left\{v_{r-1}\right\}\right\}$ if $r$ is odd. It is straightforward to check that $\Pi$ is a strong resolving partition for $W_{1,r}$. Therefore $pd_s(W_{1,r})\le \left\lceil\frac{r}{2}\right\rceil$ and the result follows.
\end{proof}

By using similar arguments we obtain the strong metric dimension of fan graphs.

\begin{remark}
For any fan graph $F_{1,r}$, $r\ge 3$,
$$pd_s(F_{1,r})=\left\{\begin{array}{ll}
                           3, & \mbox{if $r=3,4$}, \\
                            &  \\
                           \left\lceil\frac{r}{2}\right\rceil, & \mbox{if $r\ge 5$}. \\
                         \end{array}\right.
$$
\end{remark}

\section{Strong partition dimension of unicyclic graphs}

From now on we will denote by $G(C_t)$ the unicyclic graph different from a cycle whose unique cycle $C_t$ has vertex set $u_0,u_1,...,u_{t-1}$ with $t\ge 3$ an $u_i\sim u_{i+1}$ (operations with the subindex $i$ are done modulo $t$), for every $i\in \left\{0,...,t-1\right\}$.

\begin{theorem}
Let $G(C_t)$, $t\ge 3$, be a unicyclic graph of order $n$. If $|\tau(G(C_t))|=1$, then $pd_s(G(C_t))=3$.
\end{theorem}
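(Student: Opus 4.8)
The plan is to prove the two bounds $pd_s(G(C_t))\ge 3$ and $pd_s(G(C_t))\le 3$ separately. The lower bound is immediate: a unicyclic graph contains a cycle and hence is not a path, so by Theorem~\ref{path-only-two} we cannot have $pd_s(G(C_t))=2$; since $pd_s(G)\ge 2$ for every connected graph, this forces $pd_s(G(C_t))\ge 3$.

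For the upper bound I would first pin down the structure dictated by the hypothesis $|\tau(G(C_t))|=1$. No cycle vertex has degree one, so the end-vertices of $G(C_t)$ are exactly the tips of the trees hanging off $C_t$; having only one of them means there is a single pendant tree, attached at a single cycle vertex (a second attachment point would create a second cycle), and this tree has a single tip, so any branching would produce a further end-vertex and therefore it is a single path. Thus, up to relabelling the cycle vertices $u_0,\dots,u_{t-1}$, the graph consists of $C_t$ together with one pendant path $u_0\sim w_1\sim\cdots\sim w_\ell$ ($\ell\ge 1$) attached at $u_0$, whose endpoint $w_\ell$ is the unique end-vertex.

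With this structure in hand, the idea is to produce a $\mathcal{P}_2$-partition and invoke Remark~\ref{P_2-partition} with $r=2$. Concretely, set $B=\{u_0\}$, let $k=\left\lfloor\frac{t}{2}\right\rfloor$, and take
$$A_1=\{w_1,\dots,w_\ell\}\cup\{u_1,\dots,u_k\},\qquad A_2=\{u_{k+1},\dots,u_{t-1}\}.$$
One checks that $\langle A_1\cup B\rangle$ is the path $w_\ell\sim\cdots\sim w_1\sim u_0\sim u_1\sim\cdots\sim u_k$ and $\langle A_2\cup B\rangle$ is the path $u_0\sim u_{t-1}\sim\cdots\sim u_{k+1}$, so $B$ is a single vertex and each $A_i\cup B$ induces a path through $u_0$. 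It remains to confirm that these two induced paths are geodesics: since every path leaving the pendant must pass through $u_0$, one has $d(w_\ell,u_k)=\ell+\min\{k,t-k\}$ and $d(u_0,u_{k+1})=\min\{k+1,t-k-1\}$, and the choice $k=\lfloor t/2\rfloor$ makes both minima agree with the respective path lengths $\ell+k$ and $t-1-k$. This yields a $\mathcal{P}_2$-partition of cardinality three, so $pd_s(G(C_t))\le 3$ by Remark~\ref{P_2-partition}, and together with the lower bound we conclude $pd_s(G(C_t))=3$.

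The only place where care is needed is the simultaneous verification of the two geodesic conditions, which reduces to the parity bookkeeping $\frac{t-2}{2}\le k\le \frac{t}{2}$; the single choice $k=\lfloor t/2\rfloor$ satisfies this for both parities of $t$. One should also record separately that $A_2$ is nonempty, i.e.\ that $\lfloor t/2\rfloor\le t-2$, which holds for every $t\ge 3$, so that the partition genuinely has three parts.
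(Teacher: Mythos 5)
Your lower bound (unicyclic graphs are not paths, so Theorem~\ref{path-only-two} gives $pd_s(G(C_t))\ge 3$) and your structural reduction (cycle $C_t$ plus a single pendant path $u_0\sim w_1\sim\cdots\sim w_\ell$) are both correct and match the paper's setup. The gap is in the upper bound: your partition is \emph{not} a strong resolving partition, because you placed the singleton $B=\{u_0\}$ in the interior of the path induced by $A_1\cup B$. Take $x=w_i$ and $y=u_j$ with $1\le j\le k$, both in $A_1$. Every $x$--$y$ path passes through $u_0$, so $d(x,y)=i+j$, while $d(x,B)=i$ and $d(y,B)=j$; the equality $d(x,B)=d(x,y)+d(y,B)$ would force $j=0$ and the symmetric one would force $i=0$, so $B$ does not strongly resolve this pair. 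Neither does $A_2$: the nearest vertex of $A_2$ to $u_0$ is $u_{t-1}$, so $d(x,A_2)=i+1$ and $d(y,A_2)\le j+1$, and again neither equality can hold (one needs $d(y,A_2)=1-j\le 0$, the other needs $d(y,A_2)=2i+j+1>j+1$). Since the definition of strong resolution requires $x,y\notin W$, the set $A_1$ cannot resolve its own pairs, so $\{w_i,u_j\}$ is unresolved by every set of your partition. A minimal concrete instance: $t=4$, $\ell=1$, pair $(w_1,u_1)$, where $d(w_1,u_1)=2$, $d(w_1,u_0)=d(u_1,u_0)=1$ and $d(w_1,u_3)=d(u_1,u_3)=2$.

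This also means you cannot lean on the literal wording of Remark~\ref{P_2-partition}: your partition does satisfy its stated hypothesis (each $A_i\cup B$ induces a geodesic), yet it fails to be strong resolving, so that hypothesis is too weak as written. The remark is only valid --- and is only ever applied in the paper (cycles, sphere graphs) --- when the vertex of $B$ is an \emph{endpoint} of each induced geodesic; in that case any two vertices of $A_i$ lie on a common geodesic ending at the $B$-vertex, and one of the two triangle equalities holds automatically, which is the intended ``straightforward observation.'' The repair is exactly the paper's construction: place the singleton at the vertex antipodal to $u_0$ on the cycle, i.e.\ $A_2=\{u_{\lfloor t/2\rfloor}\}$ for $t$ even (the pair $\{u_{\lfloor t/2\rfloor},u_{\lceil t/2\rceil}\}$ for $t$ odd), with $A_1=V(P(u_0,v_0))\cup\{u_1,\dots,u_{\lfloor t/2\rfloor-1}\}$ and $A_3$ the remaining half-arc. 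Then a geodesic from any pendant vertex to $A_2$ runs through $u_0,u_1,\dots,u_{\lfloor t/2\rfloor}$, so every pair in $A_1$ --- including your problematic mixed pairs, since $d(w_i,A_2)=(i+j)+d(u_j,A_2)$ --- satisfies one of the required equalities with respect to $A_2$, and symmetrically for $A_3$.
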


\begin{proof}
If $|\tau(G)|=1$, then $G(C_t)$ has only one major vertex. Without loss of generality we suppose that the major vertex is $u_0$. Let $v_0$ be the terminal vertex of $u_0$ and let $P(u_0,v_0)$ be the shortest $u_0-v_0$ path in $G(C_t)$. Now, let the vertex partition $\Pi=\left\{A_1,A_2,A_3\right\}$ such that
$$A_1=V(P(u_0,v_0))\cup \left\{u_1,u_2,...,u_{\left\lfloor\frac{t}{2}\right\rfloor-1}\right\},\;A_2\left\{u_{\left\lfloor\frac{t}{2}\right\rfloor}\right\},\; A_3=\left\{u_{\left\lfloor\frac{t}{2}\right\rfloor+1},...,u_{t-1}\right\}\mbox{ if $t$ is even or,}$$
$$A_1=V(P(u_0,v_0))\cup \left\{u_1,u_2,...,u_{\left\lfloor\frac{t}{2}\right\rfloor-1}\right\},\; A_2=\left\{u_{\left\lfloor\frac{t}{2}\right\rfloor}, u_{\left\lceil\frac{t}{2}\right\rceil}\right\},\;A_3=\left\{u_{\left\lceil\frac{t}{2}\right\rceil+1},...,u_{t-1}\right\}\mbox{ if $t$ is odd.}$$
We claim that $\Pi$ is a strong resolving partition for $G(C_t)$. We consider two different vertices $x,y$ of $G(C_t)$.
If $x,y\in A_1$ or $x,y\in A_3$, then since $u_0,u_{\left\lfloor\frac{t}{2}\right\rfloor}$ (for $t$ even) and $u_0,u_{\left\lceil\frac{t}{2}\right\rceil}$ and $u_0,u_{\left\lfloor\frac{t}{2}\right\rfloor}$ (for $t$ odd) are diametral vertices in $C_t$ we have either $d(x,A_2)=d(x,y)+d(y,A_2)$ or $d(y,A_2)=d(y,x)+d(x,A_2)$. Also, if $t$ is odd and $x,y\in A_2$, then either $d(x,A_i)=d(x,y)+d(y,A_i)$ or $d(y,A_i)=d(y,x)+d(x,A_i)$ with $i\in \left\{1,3\right\}$.
Therefore, $\Pi$ is a strong resolving partition for $G(C_t)$ and we have that $pd_s(G(C_t))\le 3$. Finally the results follows by Theorem \ref{path-only-two}.
\end{proof}

\begin{theorem}\label{teo-unicyclic}
Let $G(C_t)$, $t\ge 3$, be a unicyclic graph of order $n$. If $|\tau(G(C_t))|\ge 2$, then
$$|\tau(G(C_t))|\le pd_s(G(C_t))\le |\tau(G(C_t))|+2.$$
\end{theorem}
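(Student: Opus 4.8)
The plan is to prove the two inequalities separately, obtaining the lower bound from the clique structure of the strong resolving graph and the upper bound from an explicit partition modeled on the construction already used for the case $|\tau(G(C_t))|=1$ treated above and in Proposition \ref{prop-C_1}. For the lower bound I would first observe that every end-vertex is maximally distant from every vertex of $G(C_t)$: an end-vertex $u$ has a unique neighbor $w$, and for any vertex $v$ one has $d(v,w)\le d(v,u)$, so the defining condition holds trivially. Consequently any two end-vertices are mutually maximally distant, whence $\tau(G(C_t))$ induces a clique in $(G(C_t))_{SR}$, so $\omega((G(C_t))_{SR})\ge |\tau(G(C_t))|$, and Corollary \ref{coro-pd-clique} yields $pd_s(G(C_t))\ge |\tau(G(C_t))|$.

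For the upper bound I would exhibit a strong resolving partition with $\ell+2$ sets, writing $\ell=|\tau(G(C_t))|$. The first step is to decompose the pendant structure: each maximal tree hanging from the cycle at a major vertex is split into shortest paths ending at its distinct terminal vertices, producing $\ell$ parts $L_1,\dots,L_\ell$ that together cover every non-cycle vertex, with the leaf of $L_i$ as its extreme. Two vertices lying in a common $L_i$ are then strongly resolved by any part placed beyond the attaching major vertex: since a pendant path can be left only through that vertex, the shortest path from the vertex nearer the leaf passes through the other one, yielding the additive identity required by the definition of strong resolution.

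It remains to assign the cycle vertices $u_0,\dots,u_{t-1}$ within a budget of two extra parts. Choosing a major vertex as $u_0$ (possible since $\ell\ge 2$) and letting $L_1$ be the branch attaching there, I would enlarge $L_1$ by the arc $u_0,u_1,\dots,u_{\left\lfloor\frac{t}{2}\right\rfloor-1}$, which keeps $L_1$ a shortest path, introduce one anchor part $B=\left\{u_{\left\lfloor\frac{t}{2}\right\rfloor}\right\}$ for $t$ even or $B=\left\{u_{\left\lfloor\frac{t}{2}\right\rfloor},u_{\left\lceil\frac{t}{2}\right\rceil}\right\}$ for $t$ odd, and place the remaining arc in one final part; this uses exactly $\ell+2$ sets. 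The verification mirrors Proposition \ref{prop-C_1}: two vertices of the arc inside $L_1$, or one such vertex together with a branch vertex of $L_1$, are resolved by $B$ because $u_0$ and $B$ are diametral in $C_t$; two vertices of the last arc are resolved by the part containing $u_0$; and branch pairs are resolved through the cycle as explained above.

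The main obstacle I anticipate lies in the bookkeeping of the degenerate and boundary configurations rather than in the geometric idea. When $t$ is odd the two vertices of $B$ must themselves be strongly resolved, which is achieved by $L_1$ through its nearest arc vertex $u_{\left\lfloor\frac{t}{2}\right\rfloor-1}$ (at distances $1$ and $2$ from the two anchor vertices), exactly as in the case $|\tau(G(C_t))|=1$; and small cycles, most notably the triangle $t=3$ where one arc is empty and the two anchor vertices are both adjacent to $u_0$, require a separate argument that uses a branch attached at another cycle vertex to resolve them. The delicate point is to confirm that every same-part pair satisfies one of the two additive distance identities uniformly, and that no distribution of major vertices around the cycle ever forces a third extra part, so that the bound $pd_s(G(C_t))\le |\tau(G(C_t))|+2$ holds in all cases.
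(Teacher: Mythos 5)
Your overall strategy coincides with the paper's (lower bound via mutually maximally distant end-vertices and Corollary \ref{coro-pd-clique}, upper bound via the same branch-plus-arc partition), but two concrete steps in your verification fail. The first is your claim that two vertices of the last arc are resolved by the part containing $u_0$: this is false precisely because you enlarged $L_1$ by the near arc $u_0,u_1,\dots,u_{\left\lfloor\frac{t}{2}\right\rfloor-1}$. Take $t=10$, so $L_1\supseteq\{u_0,u_1,u_2,u_3,u_4\}$, $B=\{u_5\}$, $C=\{u_6,u_7,u_8,u_9\}$, and consider the pair $u_6,u_9\in C$. Then $d(u_6,L_1)=2$ (realized by $u_4$, going through $u_5$), $d(u_9,L_1)=1$ (realized by $u_0$), and $d(u_6,u_9)=3$; neither $2=3+1$ nor $1=3+2$ holds, so $L_1$ does not strongly resolve this pair --- the near arc inside $L_1$ short-circuits the distances from the far arc. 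The correct resolving part for pairs inside $C$ is $B$, as in the paper's proof: for $u_b,u_a\in C$ with $b<a$ one has $d(u_a,B)=d(u_a,u_b)+d(u_b,B)$, since the arc running from $B$ through $C$ is a shortest path.

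The second gap is your repair of the triangle case. When $t=3$ it can happen that $u_0$ is the \emph{only} major vertex (all pendant paths attached at $u_0$, which is compatible with $|\tau(G(C_3))|\ge 2$), and then the ``branch attached at another cycle vertex'' you invoke simply does not exist. In that configuration your partition has $B=\{u_1,u_2\}$ and $C=\emptyset$, and every part $W\neq B$ satisfies $d(u_1,W)=d(u_2,W)$ (both equal $1$ for $W=L_1\ni u_0$, and $1+d(u_0,W)$ otherwise), while $d(u_1,u_2)=1$; hence neither additive identity can hold and the pair $u_1,u_2$ is strongly resolved by nothing. This is not an artifact: the true value in this configuration is $|\tau(G(C_3))|+2$, so a partition with only $|\tau(G(C_3))|+1$ nonempty parts cannot work. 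The fix is either to place $u_1$ and $u_2$ in two singleton parts (affordable exactly because $C$ is empty when $t=3$), or to treat $t=3$ separately as the paper does: $G(C_3)$ is a block graph, so Theorem \ref{easy-results}(iii) gives $pd_s(G(C_3))=|\tau(G(C_3))|+3-q$, where $1\le q\le 3$ is the number of major vertices of the triangle, which yields both bounds at once.
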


\begin{proof}
If $t=3$, then $G(C_3)$ is a block graph. Let $q$ be the number of vertices of $C_3$ being major vertices in $G(C_3)$. So, $G(C_3)$ has $n-|\tau(G(C_3))|-3+q$ cut vertices. Thus, by Theorem \ref{easy-results} (iii) we have that $pd_s(G(C_3))=n-(n-|\tau(G(C_3))|-3+q)=|\tau(G(C_3))|+3-q$. Since $1\le q\le 3$ we obtain that $|\tau(G(C_3))|\le pd_s(G(C_3))\le |\tau(G(C_3))|+2$. Now on we suppose $t\ge 4$. Notice that any two different vertices of $\tau(G(C_t))$ are mutually maximally distant between them. Thus, $\omega(G(C_t))\ge |\tau(G(C_t))|$. Thus by Theorem \ref{coro-pd-clique} we have that $pd_s(G(C_t))\ge\omega(G_{SR})=|\tau(G(C_t))|$. Now, without loss of generality suppose that $u_0$ is a major vertex and for any major vertex $u_i$ of $C_t$ let $v_{ij}$, $j\in \left\{1,...,ter(u_i)\right\}$, be a terminal vertex of $u_i$. For every major vertex $u_i$ and all its terminal vertices let $P(u_i,v_{ij})$ be the shortest $u_i-v_{ij}$ path in $G(C_t)$.

Now, for the major vertex $u_0$ and all its terminal vertices $v_{0j}$, $j\in \left\{1,...,ter(u_0)\right\}$ we define the following sets
\begin{align*}
A_{0,1}&=V(P(u_0,v_{01}))\cup \left\{u_0,u_1,...,u_{\left\lfloor\frac{t}{2}\right\rfloor-1}\right\}\\
A_{0,2}&=V(P(u_{0},v_{02}))-A_{0,1}-\left\{u_0\right\}\\
A_{0,3}&=V(P(u_{0},v_{03}))-A_{0,2}-A_{0,1}-\left\{u_0\right\}\\
& \hspace*{0.7cm}.................................................\\
A_{0,ter(u_0)}&=V(P(u_{0},v_{0\,ter(u_0)}))-A_{0,ter(u_0)-1}-...-A_{0,1}-\left\{u_0\right\}
\end{align*}
Moreover, for every major vertex $u_i$, $i\ne 0$, and all its terminal vertices $v_{ij}$, $j\in \left\{1,...,ter(u_i)\right\}$ we define the following sets.
\begin{align*}
A_{i,1}&=V(P(u_i,v_{i1}))-\left\{u_i\right\}\\
A_{i,2}&=V(P(u_{i},v_{i2}))-A_{i,1}-\left\{u_i\right\}\\
A_{i,3}&=V(P(u_{i},v_{i3}))-A_{i,2}-A_{i,1}-\left\{u_i\right\}\\
& \hspace*{0.7cm}.................................................\\
A_{i,ter(u_i)}&=V(P(u_{i},v_{i\,ter(u_i)}))-A_{i,ter(u_i)-1}-...-A_{i,1}-\left\{u_i\right\}
\end{align*}
Also let $B=\left\{u_{\left\lfloor\frac{t}{2}\right\rfloor}\right\}$ if $t$ is even or $B=\left\{u_{\left\lfloor\frac{t}{2}\right\rfloor},u_{\left\lceil\frac{t}{2}\right\rceil}\right\}$ if $t$ is odd, and let $C=\left\{u_{\left\lceil\frac{t}{2}\right\rceil+1},...,u_{t-1}\right\}$. Notice that the sets $B$, $C$ and the sets $A_{i,j}$ defined for every major vertex $u_i$ and all its terminal vertices $v_{ij}$ form a vertex partition $\Pi$ of $G(C_t)$ of cardinality $|\tau(G(C_t))|+2$. An example of the partition is drawn in Figure \ref{proof-unicyc-2}.
\begin{figure}[h]
  \centering
  \includegraphics[width=0.5\textwidth]{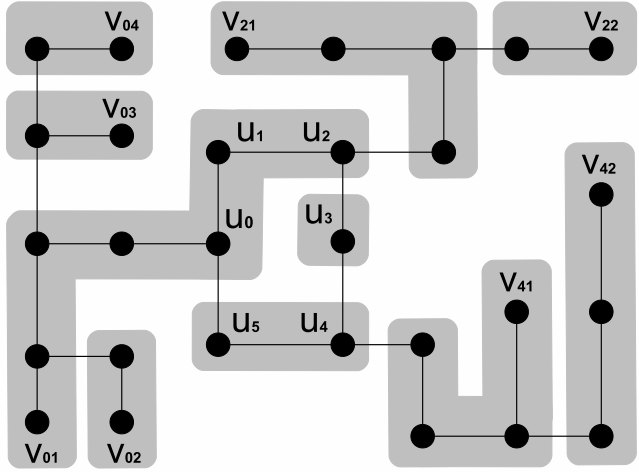}
  \caption{An example of the partition $\Pi$.}\label{proof-unicyc-2}
\end{figure}
We claim that the vertex partition $\Pi$ is a strong resolving partition for $G(C_t)$. Let $x,y$ be two different vertices of $G(C_t)$. We consider the following.
\begin{itemize}
\item If $x,y\in A_{0,1}$ or $x,y\in C$, then since $u_0,u_{\left\lfloor\frac{t}{2}\right\rfloor}$ (for $t$ even) and $u_0,u_{\left\lceil\frac{t}{2}\right\rceil}$ and $u_0,u_{\left\lfloor\frac{t}{2}\right\rfloor}$ (for $t$ odd) are diametral vertices in $C_t$ we have either $d(x,B)=d(x,y)+d(y,B)$ or $d(y,B)=d(y,x)+d(x,B)$.
\item If $x,y\in A_{i,j}$ related to some major vertex $u_i$ and $j\in \left\{2,...,ter(u_i)\right\}$, then we have either $d(x,A_{i,j-1})=d(x,y)+d(y,A_{i,j-1})$ or $d(y,A_{i,j-1})=d(y,x)+d(x,A_{i,j-1})$.
\item If $x,y\in A_{i,1}$ related to some major vertex $u_i\ne u_0$, then we have the following cases.
\begin{itemize}
\item If $i\in \left\{1,...,\left\lfloor\frac{t}{2}\right\rfloor-1\right\}$, then we have either $d(x,A_{0,1})=d(x,y)+d(y,A_{0,1})$ or $d(y,A_{0,1})=d(y,x)+d(x,A_{0,1})$.
\item If $i\in \left\{\left\lfloor\frac{t}{2}\right\rfloor,\left\lceil\frac{t}{2}\right\rceil\right\}$, then we have either $d(x,B)=d(x,y)+d(y,B)$ or $d(y,B)=d(y,x)+d(x,B)$.
\item If $i\in \left\{\left\lceil\frac{t}{2}\right\rceil+1,...,t-1\right\}$, then we have either $d(x,C)=d(x,y)+d(y,C)$ or $d(y,C)=d(y,x)+d(x,C)$.
\end{itemize}
\item If $t$ is odd and $x,y\in B$, then we have either $d(x,C)=d(x,y)+d(y,C)$ or $d(y,C)=d(y,x)+d(x,C)$.
\end{itemize}
Therefore, $\Pi$ is a strong resolving partition for $G(C_t)$ and the proof is complete.
\end{proof}

The above bounds are tight as we can see at next. Moreover, there are unicyclic graphs achieving also the only value in the middle between lower and upper bound. An example of that is the family of unicyclic graphs $\mathcal{C}_1$ of Proposition \ref{prop-C_1}.

\begin{proposition}
Let $G(C_t)$, $t\ge 3$, be a unicyclic graph of order $n$ and $|\tau(G(C_t))|\ge 2$.
\begin{itemize}
\item If $t=3$ and $C_t$ has only one major vertex, then $pd_s(G(C_t))=|\tau(G(C_t))|+2$.
\item If every vertex of $C_t$ is a major vertex, then $pd_s(G(C_t))=|\tau(G(C_t))|$.
\end{itemize}
\end{proposition}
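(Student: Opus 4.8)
The plan is to treat the two bullets separately, obtaining the first from the block-graph formula already exploited in the proof of Theorem~\ref{teo-unicyclic}, and the second by sharpening the upper bound of that theorem through one explicit, economical partition.

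For the first bullet I observe that when $t=3$ the graph $G(C_3)$ is a block graph, so Theorem~\ref{easy-results}(iii) gives $pd_s(G(C_3))=n-c$ with $c$ the number of cut vertices. I would then count the cut vertices directly, exactly as in the proof of Theorem~\ref{teo-unicyclic}: the $|\tau(G(C_3))|$ end-vertices are simplicial, the two non-major vertices of the triangle are simplicial (their two cycle-neighbours are adjacent, so their neighbourhood is a clique), and every remaining vertex, namely the unique major vertex together with the internal vertices of the pendant paths, is a cut vertex. With a single major vertex this yields $c=n-|\tau(G(C_3))|-2$, hence $pd_s(G(C_3))=|\tau(G(C_3))|+2$. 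This is just the quantity $|\tau(G(C_3))|+3-q$ from the proof of Theorem~\ref{teo-unicyclic} specialized to $q=1$.

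For the second bullet the lower bound $pd_s(G(C_t))\ge|\tau(G(C_t))|$ is already provided by Theorem~\ref{teo-unicyclic}, so the substance is the matching upper bound. The key observation is that, when every cycle vertex is major, the two auxiliary sets $B$ and $C$ used in the proof of Theorem~\ref{teo-unicyclic} are no longer needed, because each cycle vertex $u_i$ can be absorbed into one of its own pendant branches. Concretely, for each major vertex $u_i$ with terminal vertices $v_{i1},\dots,v_{i,ter(u_i)}$ I set $A_{i,1}=V(P(u_i,v_{i1}))$, so that $u_i$ itself lies in this set, and $A_{i,j}=V(P(u_i,v_{ij}))-\{u_i\}$ for $j\ge 2$. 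Since every $u_i$ is major, these sets cover all cycle vertices and partition $V(G(C_t))$ into exactly $\sum_i ter(u_i)=|\tau(G(C_t))|$ classes.

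It remains to verify that this partition $\Pi$ is strong resolving, which I expect to be the main (though essentially routine) step. Take distinct $x,y$ in a common class. If $x,y\in A_{i,j}$ with $j\ge 2$, then both lie on the geodesic $P(u_i,v_{ij})$ directed toward $u_i$, and since $u_i\in A_{i,1}$ (a disjoint set) we have $d(x,A_{i,1})=d(x,u_i)$ and $d(y,A_{i,1})=d(y,u_i)$, so the one farther from $u_i$ is strongly resolved against the other by $A_{i,1}$. If instead $x,y\in A_{i,1}$, I choose any other cycle vertex $u_k$ (which exists and is major because $t\ge 3$) and use $A_{k,1}$: as $x,y$ sit in the branch of $u_i$, every shortest path from them toward $A_{k,1}$ passes through $u_i$ and then meets $u_k$ as the nearest vertex of $A_{k,1}$, giving $d(x,A_{k,1})=d(x,u_i)+d(u_i,u_k)$ and likewise for $y$; again the farther of $x,y$ from $u_i$ is strongly resolved against the other, and $x,y\notin A_{k,1}$ since $k\ne i$. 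Hence $\Pi$ is a strong resolving partition of cardinality $|\tau(G(C_t))|$, so $pd_s(G(C_t))\le|\tau(G(C_t))|$, and together with the lower bound this gives the claimed equality.
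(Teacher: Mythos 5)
Your first bullet is correct and is precisely the paper's argument: $G(C_3)$ is a block graph, Theorem \ref{easy-results}(iii) applies, and the cut-vertex count is $n-|\tau(G(C_3))|-2$ (every vertex other than the end-vertices and the two degree-two triangle vertices is a cut vertex, whether or not the pendant subtree attached to the major vertex branches).

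The second bullet contains a genuine gap: the sets you define need not be pairwise disjoint, so in general they are not a partition of $V(G(C_t))$. The geodesics $P(u_i,v_{i1}),\dots,P(u_i,v_{i\,ter(u_i)})$ can share much more than $u_i$, since nothing in the hypotheses forces the subtree hanging at a major vertex to be a spider of paths meeting only at $u_i$. For instance, if $u_i\sim a$ and $a$ is adjacent to two leaves $v_{i1},v_{i2}$ (so $ter(u_i)=2$), then your $A_{i,1}=\{u_i,a,v_{i1}\}$ and $A_{i,2}=V(P(u_i,v_{i2}))-\{u_i\}=\{a,v_{i2}\}$ both contain $a$. The paper's proof avoids exactly this by setting $A_{i,j}=V(P(u_i,v_{ij}))-A_{i,j-1}-\cdots-A_{i,1}$, removing all previously assigned vertices rather than only $u_i$, so that each $A_{i,j}$ with $j\ge 2$ is a terminal subpath of $P(u_i,v_{ij})$. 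Your verification inherits the same defect: in the branching example the claim $d(x,A_{i,1})=d(x,u_i)$ is false (the nearest vertex of $A_{i,1}$ to $v_{i2}$ is $a$, not $u_i$); the correct statement is $d(x,A_{i,1})=d(x,p)+d(p,A_{i,1})$, where $p$ is the vertex at which $P(u_i,v_{ij})$ leaves the previously covered vertices, and strong resolution then follows because $x,y$ still lie on a common geodesic through $p$. By contrast, your handling of $x,y\in A_{i,1}$, resolving through $A_{k,1}$ for an arbitrary other cycle vertex $u_k$, is sound (and somewhat more uniform than the paper's case split according to the position of $u_i$ on the cycle). So your overall strategy---discarding the sets $B$ and $C$ of Theorem \ref{teo-unicyclic} and absorbing each cycle vertex into one of its own branches---is the paper's strategy, but as written the upper-bound construction fails for every unicyclic graph in which some pendant subtree branches, and repairing it requires the paper's subtraction scheme together with the attachment-point distance argument.
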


\begin{proof}
If $t=3$, then $G(C_3)$ is a block graph with $n-|\tau(G(C_3))|-2$ cut vertices. Thus by Theorem \ref{easy-results} (iii) we have that $pd_s(G(C_3))=n-(n-|\tau(G(C_3))|-2)=|\tau(G(C_3))|+2$ and (i) is proved.

The technique of the proof of (ii) is relative similar to the proof of Theorem \ref{teo-unicyclic}. Without loss of generality suppose that. As above for any major vertex $u_i$, $i\in \{0,...,t-1\}$, and all its terminal vertices $v_{ij}$, $j\in \left\{1,...,ter(u_i)\right\}$ we define the following sets
\begin{align*}
A_{i,1}&=V(P(u_i,v_{i1}))\\
A_{i,2}&=V(P(u_{i},v_{i2}))-A_{i,1}\\
A_{i,3}&=V(P(u_{i},v_{i3}))-A_{i,2}-A_{i,1}\\
& \hspace*{0.7cm}.................................................\\
A_{i,ter(u_i)}&=V(P(u_{i},v_{i\,ter(u_i)}))-A_{i,ter(u_i)-1}-...-A_{i,1}
\end{align*}
Notice that the sets $A_{i,j}$ defined for every major vertex $u_i$ and all its terminal vertices $v_{ij}$ form a vertex partition $\Pi$ of $G(C_t)$ of cardinality $|\tau(G(C_t))|$.
By using similar ideas to those ones in the the proof of Theorem \ref{teo-unicyclic} we obtain that the vertex partition $\Pi$ is a strong resolving partition for $G(C_t)$. Therefore $pd_s(G(C_t))\le |\tau(G(C_t))|$ and the result follows by the lower bound of Theorem \ref{teo-unicyclic}.
\end{proof}

\end{document}